\newtheorem{theorem}{Theorem}[section]
\newtheorem{proposition}[theorem]{Proposition}
\newtheorem{lemma}[theorem]{Lemma}
\newtheorem{definition}[theorem]{Definition}
\begin{document}

\title[Linear Filters]{Linear Filters and Hereditary Torsion Theories in Funtor Categories}
\author{M. Ortiz-Morales$^{a}$}
\address{Facultad de Ciencias UAEM\'ex, Mexico}
\email{mortizmo@uaemex.mx}
\thanks{}
\author{S. Diaz-Alvarado$^{b}$}
\address{Facultad de Ciencias UAEM\'ex, Mexico}
\email{sda@uaemex.mx}
\urladdr{http://www.uaemex.mx}
\thanks{$^{A}$,$^{B}$ Universidad Aut\'onoma del Estado de M\'exico, UAEM\'ex. The firs author thanks PROMEP for giving him financial support
to to this work. }
\date{November 28, 2014}
\subjclass{2000]{Primary 05C38, 15A15; Secondary 05A15, 15A18}}
\keywords{ Torsion Theory, Functor Categories}
\dedicatory{}
\thanks{This paper is in final form and no version of it will be submitted
for publication elsewhere.}

\begin{abstract}
we introduce the notion of Gabriel filter for a preadditive category $\mathcal{C}$ and we show that there is a
 bijective correspondence between  Gabriel  filters of  $\mathcal{C}$ and hereditary torsion theories in the category of additive functors $(\mathcal{C},\mathbf{Ab})$, obtaining a generelization of the theorem given by Gabriel [Ga] and Maranda [Ma] which establishes a bijective correspondence between Gabriel
 filters for a ring and hereditary torsion theories in the corresponding category of modules. 
\end{abstract} 
\maketitle

\section{Introduction and Basic Results}
In this paper $\mathcal{C}$ will be an arbitrary skeletally small preadditive category, and
$\mathrm{Mod}(\mathcal{C} )$ will denote the category of contravariant functors from $\mathcal{C}$ to the category
of abelian groups $\mathbf{Ab}$. Following the approach by Mitchel [M], we can think of $\mathcal{C}$ as
a ring “with several objects” and $\mathrm{Mod}(\mathcal{C} )$ as a category of $\mathcal{C}$ -modules. The aim of
the paper is to show that the notions of linear filter  can be extended to preadditive categories 
obtaining generalizations of the  theorem due to Gabriel that stablishes a bijective correspondence 
hereditary torsion theories and linear filters. 
The notion of torsion theory (or torsion pair) was introduced by S. E.
Dickson [Di] in the sixties in the setting of abelian categories, generalizing
the classical notion for abelian groups. Since then it has received a lot of
attention in various contexts, like non commutative localization, or representation theory of artin algebras.
Of particular importance are the hereditary torsion pairs, due to their
role in localization theory, notably their characterization in terms of Gabriel
filters of ideals and in terms of Gabriel topologies.
\subsection{Torsion Theories}

In this part we recall some basic concepts about torsion theories. The content of this subsection is  taken from [Bo].

  Let $\mathcal{A}$ an abelian category. 
  
  \begin{definition}
   A torsion theory for $\mathcal A$ is a pair $(\mathcal{T,F})$ of classes of objects on $\mathcal{A}$ such that
   \begin{itemize}
    \item [(i)] $\mathrm{Hom}(T,F)=0$ for all $T\in\mathcal{T}, F\in\mathcal{F}$.
   \item [(ii)] If $\mathrm{Hom}(C,F)=0$ for all $F\in\mathcal{F}$, then $C\in\mathcal{T}$.
   \item [(iii)] If $\mathrm{Hom}(T,C)=0$ for all $T\in\mathcal{T}$, then $C\in\mathcal{F}$.
   \end{itemize}
  \end{definition}

$\mathcal T$ is called a torsion class and its objects are torsion objects, while $\mathcal{F}$ is a torsion-free class
 consisting  of torsion free objects.
 
 Any given class $\mathcal{B}$ of objects \textit{generates} a torsion theory in the following way:
 \begin{eqnarray*}
  \mathcal{F}=\{F|\mathrm{Hom}(C,F)=0\ \text{for all $C\in\mathcal{B}$}\}.\\
    \mathcal{T}=\{T|\mathrm{Hom}(T,F)=0\ \text{for all $F\in\mathcal{F}$}\}.
\end{eqnarray*}
 
 Clearly this pair $(\mathcal{T,F})$ is a torsion theory, and $\mathcal{T}$  is the smallest torsion class containing $\mathcal{B}$.
 Dually, the class $\mathcal{B}$ \textit{cogenerates} a torsion theory $(\mathcal{T,F})$ such that $\mathcal{F}$ is the smallest torsion free theory

 A torsion pair $(\mathcal{T,F})$ is \textit{hereditary} if $\mathcal{T}$ is closed under subojects. In case
$\mathcal{A}$  is a module category, the hereditary torsion pairs are in bijective correspendence with the
Gabriel filters of  the ring (see [Bo. Theorem 5.1, Ch. VI]).

A class $\mathcal{B}$ of objects is called a pretorsion class if it is closed under quotients objects and coproducts
and it is a pretorsion-free class if it is closed under subobjects and products. A pretorsion class is hereditary if it is closed
under subobjects.

In the first section we fix the notation and recall some notions from functor
categories that will be used throughout the paper. In the second section we define 
the concept of linear filter for a preatidive category $\mathcal{C}$ and we  generalize the theorem
given in [Bo. Theorem 5.1, Ch. VI]  and we  stablish a one to one correspondence  between
Gabriel Filters in a preadditive category and hereditary torsion theories 
in the category of $\mathcal{C}$-modules.  In the third section we see
 that the linear filters induce a topology for $\mathcal{C}(A,B)$ for every pair
 of objects $A,B$ in $\mathcal{C}$ that makes that the
 composition $\mathcal{C}(A,B)\times\mathcal{C}(B,C)\rightarrow \mathcal{C}(A,C)$  be continous, obtaining
 similar results as the given in [Bo, Ch. VI]. Finally, in the  fourth section  we explore 
 some examples of linear filters for path categories, and we show how some natural examples of
 hereditary torsion theories appear in  the category of $\mathcal C$-modules.

\subsection{The Category of $\mathcal{C}$-modules} 
Let $\mathcal C$ be a preadditive skeletally small category. By $\mathrm{Mod}(\mathcal C )$ we denote the cate
gory of additive contravariant functors from $C$ to the category of abelian groups.
$\mathrm{Mod}(\mathcal C )$ is then an abelian category with arbitrary sums and products; in fact it has
arbitrary limits and colimits, and the filtered limits are exact (Ab5 in Grothendiek
terminology). It has enough projective and injective objects. For any object $C \in\mathcal  C$ ,
the representable functor $\mathcal C ( , C)$ is projective, the arbitrary sums of representable
functors are projective, and any object $M \in\mathrm{ Mod}(\mathcal C )$ is covered by an epimorphism
$\coprod_{i\in I} \mathcal{C}(-,C_i)\rightarrow M\rightarrow 0$ (see [AQM]).

We will indistinctly say that $M$ is an object of $\mathrm{Mod}(\mathcal C )$ or that $M$ is a $\mathcal{C}$ -module. A
representable functor $\mathcal C ( -, C)$ will sometimes be denoted by $( -, C)$.

Let $M$ be  a $\mathcal C$-module and $C$ and object in $\mathcal{C}$. Then, by Yoneda's Lemma there exists a one to one correspondence
\[
\theta=\theta_{C,M}:(\mathcal{C}(-,C),M)\rightarrow M
\]
where $(\mathcal{C}(-,C),M)$ is the class of natural transformations from the funtor $\mathcal C(-,C)$ to the set valued funtor $M$, given by
$ \theta(\eta)=\eta_C(1_C)$.

Since $\mathcal C$ is a skeletally small category, then the class  $(\mathcal{C}(-,C),M)$ 
is a set and we can index it  as
 $(\mathcal{C}(-,C),M)=\{\eta^x:\mathcal{C}(-,C)\rightarrow M\}_{x\in M(C)}$, and it induces a morphism of $\mathcal{C}$-modules
 $\eta^C=(\eta^x)_{x\in M(C)}:\coprod_{x\in M(C)}\mathcal{C}(-,C)\rightarrow M$. It is clear that
  $\eta^C_C=(\eta^x_C)_{x\in M(C)}:\mathcal{C}(C,C)\rightarrow M(C)$ is an epimorphism, and we have an epimorphism of 
  $\mathcal{C}$-modules
  \[
\eta=(\eta^C)_{C\in\mathcal{C}}:\coprod_{C\in\mathcal{C}}\coprod_{x\in M(C)}\mathcal{C}(-,C)\rightarrow M
  \]
  
 As above,  let $x\in M(C)$ and $\eta^x:\mathcal{C}(-,C)\xrightarrow {p_x}\mathrm{Im}(\eta^x)\xrightarrow{j^x} M$ the canonical factorization of $\eta^x$. 
 The family $\{j^x:\mathrm{Im}(\eta^x)\rightarrow M\}$ induces a morphism of $\mathcal C$-modules
  $j^C=(j^x)_{x\in M(C)}:\coprod\mathrm{Im}(\eta^x)\rightarrow M$.  
  Then $j^C_C=(j_C^x)_{x\in M(C)}:\coprod\mathrm{Im}(\eta^x_C)\rightarrow M(C)$
is  an isomorphism and we have an isomorphism of $\mathcal{C}$-modules 
 \[
j=(j^C)_{C\in\mathcal{C}}:\coprod_{C\in\mathcal{C}}\coprod_{x\in M(C)}\mathrm{Im}(\eta^x)\rightarrow M
  \]
Finally, we can put the above isomorphism as 
\begin{equation}\label{isocop}
 \coprod_{C\in\mathcal{C}}\coprod_{x\in M(C)}\mathcal{C}(-,C)/\mathrm{Ker}(\eta^x)\xrightarrow{\cong} M
\end{equation}

  \subsection{The funtor $D:\mathrm{Mod}(\mathcal{C})\rightarrow \mathrm{Mod}(\mathcal{C}^{op})$}
  We consider the funtor $ D:\mathrm{Mod}(\mathcal{C})\rightarrow \mathrm{Mod}(\mathcal{C}^{op})$ 
   defined by $(DM)(C)=\mathrm{Hom}_{\mathbf{Ab}}(M(C),\mathbf{Q/Z})$ for all pair $M\in\mathrm{Mod}(\mathcal C)$ and $C\in\mathcal{C}$.
   
   We left to the reader the  following
  \begin{proposition}\label{dual}
   Let $M\in\mathrm{Mod}(\mathcal C)$ and $N\in\mathrm{Mod}(\mathcal C^{op})$. Then we have an isomorphism
   \begin{eqnarray*}
    \Phi:\mathrm{Hom}_{\mathcal C}(M,DN)\rightarrow \mathrm{Hom}_{\mathcal C^{op}}(N,DM)
   \end{eqnarray*}
defined  by $[\Phi(\eta)]_{C}(Y)(X)=\eta_{C}(Y)(X)$ for each  pair $X\in M(C)$ and $Y\in N(C)$.
\end{proposition}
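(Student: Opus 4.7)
The plan is to exhibit $\Phi$ as the canonical ``swap'' isomorphism that regards a natural transformation $\eta\colon M\to DN$ as a biadditive pairing $M(C)\times N(C)\to\mathbf{Q/Z}$, dinatural in $C$, and then reads off the same pairing with its arguments reversed. First I will define a candidate inverse $\Psi\colon \mathrm{Hom}_{\mathcal{C}^{op}}(N,DM)\to\mathrm{Hom}_{\mathcal{C}}(M,DN)$ by the symmetric formula $[\Psi(\xi)]_C(X)(Y)=\xi_C(Y)(X)$. Granted well-definedness, the equalities $\Psi\Phi=\mathrm{id}$ and $\Phi\Psi=\mathrm{id}$ are immediate, since both transformations simply swap the two arguments, so applying them twice restores the original data.

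The substantive step is to check that $\Phi(\eta)$ really is a morphism in $\mathrm{Mod}(\mathcal{C}^{op})$. For each $C\in\mathcal{C}$, the assignment $Y\mapsto\bigl(X\mapsto \eta_C(X)(Y)\bigr)$ is additive in $Y$ and in $X$ because $\eta_C$ is a group homomorphism and because each $\eta_C(X)$ lies in $\mathrm{Hom}_{\mathbf{Ab}}(N(C),\mathbf{Q/Z})$. For naturality, given $f\colon C\to C'$ in $\mathcal{C}$, I will evaluate the two sides of the naturality square for $\Phi(\eta)$ at an element $Y\in N(C)$ and then at an arbitrary $X'\in M(C')$. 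One side reduces to $\eta_C(M(f)(X'))(Y)$ and the other to $\eta_{C'}(X')(N(f)(Y))$; these are equal precisely by the naturality square for $\eta$ together with the definition of $(DN)(f)$ as precomposition with $N(f)$. The same verification, with the roles of $M$ and $N$ interchanged, shows that $\Psi(\xi)$ is natural.

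Additivity of $\Phi$ as a homomorphism of abelian groups is a one-line check from the pointwise definition. The main obstacle is purely notational: one must keep straight that $M$ is contravariant on $\mathcal{C}$ while $N$ is covariant, and that $D$ sends a morphism to precomposition. This contravariance reversal is exactly what converts the naturality condition for $\eta\colon M\to DN$ into the naturality condition for $\Phi(\eta)\colon N\to DM$, so once the square is written out with care no further input is needed.
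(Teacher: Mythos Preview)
Your proof is correct and is precisely the standard argument: view $\eta$ as a family of biadditive pairings $M(C)\times N(C)\to\mathbf{Q}/\mathbf{Z}$, observe that naturality of $\eta$ is equivalent to naturality of the swapped family, and conclude that swapping gives the inverse $\Psi$. The paper itself does not supply a proof---it explicitly leaves this proposition to the reader---so there is nothing to compare against; your argument is exactly what one would expect the omitted verification to be. One minor remark: the formula in the statement as printed, $[\Phi(\eta)]_{C}(Y)(X)=\eta_{C}(Y)(X)$, appears to contain a typo (the arguments on the right-hand side are in the wrong order for the types to match), and you have silently corrected it to $\eta_{C}(X)(Y)$, which is the only sensible reading.
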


Now we can prove the following 
\begin{proposition}\label{injective}
Let $C\in\mathcal{C}$. Then the $\mathcal{C}$-module $D(\mathcal{C}(C,-))$ is injective
\end{proposition}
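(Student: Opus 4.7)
The plan is to reduce the injectivity of $D(\mathcal{C}(C,-))$ to two well-known exactness facts: the exactness of evaluation $M \mapsto M(C)$ on $\mathrm{Mod}(\mathcal{C})$, and the injectivity of $\mathbf{Q/Z}$ in $\mathbf{Ab}$ (so that $\mathrm{Hom}_{\mathbf{Ab}}(-,\mathbf{Q/Z})$ is exact). Injectivity of $D(\mathcal{C}(C,-))$ is by definition the exactness of the functor $\mathrm{Hom}_{\mathcal{C}}(-,D(\mathcal{C}(C,-)))$, so it suffices to exhibit a natural isomorphism between this functor and a composition of the two exact functors above.

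First I would apply Proposition \ref{dual} to the module $N=\mathcal{C}(C,-)\in\mathrm{Mod}(\mathcal{C}^{op})$, obtaining for every $M\in\mathrm{Mod}(\mathcal{C})$ a natural isomorphism
\[
\Phi_M:\mathrm{Hom}_{\mathcal{C}}(M,D(\mathcal{C}(C,-)))\xrightarrow{\;\cong\;}\mathrm{Hom}_{\mathcal{C}^{op}}(\mathcal{C}(C,-),DM).
\]
Naturality in $M$ is essential here; it should follow directly from the componentwise definition $[\Phi(\eta)]_{X}(Y)(W)=\eta_{X}(Y)(W)$ given in Proposition \ref{dual} by chasing a morphism $M\to M'$ through the formula.

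Next I would invoke Yoneda's Lemma in $\mathrm{Mod}(\mathcal{C}^{op})$: since $\mathcal{C}(C,-)$ is the covariant representable at $C$, evaluation at $1_C$ yields a natural isomorphism
\[
\mathrm{Hom}_{\mathcal{C}^{op}}(\mathcal{C}(C,-),DM)\xrightarrow{\;\cong\;}(DM)(C)=\mathrm{Hom}_{\mathbf{Ab}}(M(C),\mathbf{Q/Z}),
\]
again natural in $M$. Composing with $\Phi_M$ identifies the contravariant functor $\mathrm{Hom}_{\mathcal{C}}(-,D(\mathcal{C}(C,-)))$ with $\mathrm{Hom}_{\mathbf{Ab}}((-)(C),\mathbf{Q/Z})$.

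Finally, exactness follows formally: the evaluation functor $\mathrm{ev}_C:\mathrm{Mod}(\mathcal{C})\to\mathbf{Ab}$, $M\mapsto M(C)$, is exact because limits and colimits in $\mathrm{Mod}(\mathcal{C})$ are computed pointwise, and $\mathrm{Hom}_{\mathbf{Ab}}(-,\mathbf{Q/Z})$ is exact because $\mathbf{Q/Z}$ is a divisible, hence injective, abelian group. Thus the composition is exact, proving that $D(\mathcal{C}(C,-))$ is injective. The only point requiring real care is verifying the naturality of $\Phi$ in $M$; once this is in hand the argument is essentially formal, and there is no serious obstacle.
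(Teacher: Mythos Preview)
Your argument is correct and is precisely the approach the paper intends: its proof is the one-line remark ``It follows from the Yoneda's Lemma and the Proposition~\ref{dual},'' and what you have written is exactly the natural unpacking of that sentence into the chain of isomorphisms $\mathrm{Hom}_{\mathcal{C}}(M,D(\mathcal{C}(C,-)))\cong\mathrm{Hom}_{\mathcal{C}^{op}}(\mathcal{C}(C,-),DM)\cong\mathrm{Hom}_{\mathbf{Ab}}(M(C),\mathbf{Q}/\mathbf{Z})$ followed by the exactness of evaluation and of $\mathrm{Hom}_{\mathbf{Ab}}(-,\mathbf{Q}/\mathbf{Z})$.
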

\begin{proof}
 It follows from the Yoneda's Lemma and the Proposition \ref{dual}.
\end{proof}

\subsection{Ideals}
A right ideal in an additive category $\mathcal{C}$ is a subfuntor of $\mathcal{C}(-,C)$ for some $C\in\mathrm{\ Ob}\ \mathcal{C}$, and  a left ideal is a 
subfuntor of $\mathcal{C}(C,-)$ (see [M]and [BR]). A two sided ideal is a subfuntor of the two variable funtor $\mathcal{C}(-,?)$.  Given a two sided ideal $I$ in $\mathcal{C}$
we can form the quotient category  $\mathcal{C}/I$ wich has the same objects  as $\mathcal{C}$ and
\[
(\mathcal{C}/I)(A,B)=\mathcal{C}(A,B)/I(A,B)
\]

Given a family $\mathcal{B}$ of objects in $\mathcal{C}$ we can define a two sided ideal $I_{\mathcal{B}}$ of $\mathcal{C}$ generated  by $\mathcal{B}$ setting $f\in I_{\mathcal{B}}(A,B)$ if and only if  $f $ is a finite sum of maps of the form $A\xrightarrow{h} C \xrightarrow{g}B$ with $C$ in $\mathcal{B}$.

 If $I$ is  a two sided ideal in $\mathcal{C}$ and $M$ is a $\mathcal{C}$-module the  $\mathcal{C}$-submodule $IM$ is defined as
 \[
 IM(A)=\sum_{f\in I(A,C), C\in \mathcal{C}} \mathrm{Im} M(f)
 \]

It is clear that $IM$ is a subfuntor of $M$. Morever any epimorphism $M\rightarrow M'$ induces an epimorphism $IM\rightarrow IM'$ .

Note that for every morphism of $\mathcal{C}$-modules  $\eta:\mathcal{C}(-,C)\rightarrow M$, the funtor kernel 
$\mathrm{Ker}(\eta)\subset \mathcal{C}(-,C)$ is a right ideal.

\section{Linear Filters and hereditary torsion theories}

In this section we introduce the notions of linear filter and Gabriel filter for an arbitrary preadditive category $\mathcal C$ and we show that there is
a one to one correspendence between linear filters in $\mathcal{C}$ and classes of pretorsion theories in $\mathrm{Mod}(\mathcal{C})$
also a one to one correspendence between Gabriel filters and classes of hereditary torsion theories. 
\subsection{Linear Filters}
Let be $N$ a $\mathcal{C}$-module,  $K$ be a $\mathcal{C}$-submodule of $N$ and $C$ an object in $\mathcal{C}$. For each 
$C'\in \mathcal{C}$ 
 and each $x \in N(C)$  we consider the set \[
(K(C'):x)=\{f\in\mathcal{C}(C',C)|N(f)(x)\in K(C')\}
\]

Thus, we can define a $\mathcal{C}$-module $(K(-):x):\mathcal{C}\rightarrow \mathbf{Ab}$ as $(K(-):x)(C')=(K(C'):x)$ 
for all $C'\in \mathcal{C}$ . Clearly $(K(-):x)$ is an ideal of $\mathcal{C}(-,C)$. 

Let  $\mathbf{0}$  be  the  trivial $\mathcal{C}$-module, $\mathbf{0}(C)=0_{\mathbf{Ab}}$, for all $C\in\mathcal{C}$.
We denote by $\mathrm{Ann}(x,-)$ the functor $(\mathbf{0}(-):x)$ which is defined as $\mathrm{Ann}(x,-)(C')=
\mathrm{Ann}(x,C')=(\mathbf{0}(C'):x)=\{f\in\mathcal{C}(C',C)|N(f)(x)\in \mathbf{0}(C')\}=\{f\in\mathcal{C}(C',C)|N(f)(x)=0\}$.

We have the following Lemma

\begin{lemma}\label{firstlemma}
\begin{itemize}
\item[(i)]  Let $N$ and $K$ be  $\mathcal{C}$-modules such that $K\subset N$.  Then,  $\mathrm{Ann}(x+K(C),-)= (K(-):x)$  for all $C\in\mathcal{C}$ and $x\in N(C)$.
\item[(ii)] Let $C\in\mathcal{C}$ and  $I$ be and ideal of $\mathcal{C}(-,C)$. Then, $(I(-):1_C)=I$, for the dentity morphism  $1_C\in\mathcal{C}(C,C)$.
\end{itemize}
\end{lemma}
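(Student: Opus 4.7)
The plan is to verify both statements by directly unpacking the definitions of $(K(-):x)$ and $\mathrm{Ann}(x,-)$, together with the quotient-module structure on $N/K$ and the action of representable functors on identity morphisms. No additional machinery is required, so the proof will be a short, pointwise comparison of two ideals inside $\mathcal{C}(-,C)$, followed by a remark on naturality in $C'$.

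For (i), I would fix $C'\in\mathcal{C}$ and $f\in\mathcal{C}(C',C)$ and use that $\mathrm{Ann}(x+K(C),-)$ is by definition the annihilator of the coset $x+K(C)$ in the quotient module $N/K$, which acts on morphisms by the rule induced from $N$. Hence $(N/K)(f)(x+K(C))=N(f)(x)+K(C')$, which vanishes in $(N/K)(C')$ precisely when $N(f)(x)\in K(C')$; but this is exactly the condition $f\in(K(C'):x)$. Since both $(K(-):x)$ and $\mathrm{Ann}(x+K(C),-)$ inherit their structure maps from $N$ (respectively $N/K$), naturality in $C'$ is automatic, so the two subfunctors of $\mathcal{C}(-,C)$ coincide.

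For (ii), I would set $N=\mathcal{C}(-,C)$, whose contravariant action is precomposition: for any $C'$ and $f\in\mathcal{C}(C',C)$ one has $N(f)(1_C)=1_C\circ f=f$. Thus $f\in(I(-):1_C)(C')$ iff $f\in I(C')$, giving the equality $(I(-):1_C)=I$ as subfunctors of $\mathcal{C}(-,C)$. The main thing to keep track of, and essentially the only subtlety, is remembering in each part which module the annihilator is computed in: in (i) it is the quotient $N/K$, while in (ii) the Yoneda-type identity $N(f)(1_C)=f$ is what makes the generic ideal $I$ reappear as $(I(-):1_C)$. Beyond this, both parts are routine bookkeeping.
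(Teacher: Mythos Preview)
Your proposal is correct: both parts follow immediately from unwinding the definitions, exactly as you outline. The paper itself leaves this lemma to the reader, so there is no alternative argument to compare against; your direct pointwise verification is precisely the routine check the authors had in mind.
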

\begin{proof}
 We left the proof for the reader.
\end{proof}

Now we can define the concept of linear filter for a preadditive category. This definition generalizes the definition of linear filter
for rings.

\begin{definition}  1) A family $\mathcal{F}$ of ideals of $\mathcal{C}(-,C)$ is a  filter for $\mathcal{C}(-,C)$ if the following conditions hold:
\begin{itemize}
\item[($T_1$)] If $I\in\mathcal{F}$ and $I\subset J$ then $J\in\mathcal{F}$
\item[($T_2$)]  If $I$ and $J$ belong to $\mathcal{F}$, then $I\cap J\in\mathcal{F}$
\end{itemize}
2) A collection $\{\mathcal{F}_C\}_{C\in \mathcal{C}}$ is a linear filter for the category $\mathcal{C}$ if  each   
$\mathcal{F}_{C}$ is a filter for
 $\mathcal{C}(-,C)$ and:
 \begin{itemize}
  \item[($T_3$)] For all  $I\in \mathcal{F}_{C}$, $B\in\mathcal{C}$ and  each $h\in \mathcal{C}(B,C)$ we have 
 $(I(-):h)\in\mathcal{F}_{B}$, where $(I(C'):h)=\{f:C'\rightarrow  B|\mathcal{C}(f,A)(h)=hf\in I(C')\}$ for all $C'\in \mathcal{C}$.
 \end{itemize}

 3) A collection   $\{\mathcal{F}\}_{C\in\mathcal{C}}$ is a Gabriel  filter for the category $\mathcal{C}$ if the following holds:
 \begin{itemize}
  \item[($T_4$)]
Let $J\in\mathcal{F}_C$ and assume that $I\subset\mathcal{C}(-,C)$ is  an ideal such that $(I(-):h)\in\mathcal{F}_B$ for all 
$h\in J(B)$  for all $B\in\mathcal{C}$, then $I\in\mathcal{F}_C$. 
\end{itemize}
\end{definition}

Now we show that there is a   bijection between hereditary
pretorsion classes and linear filters on $\mathcal C$. To do it we need two previous lemmas.

\begin{lemma}
Let $\mathcal{F}=\{\mathcal{F}_C\}$ be  a linear filter on  $\mathcal{C}$. Then,  the class $\mathcal{T}_{\mathcal{F}}$
consisting of  $\mathcal{C}$-modules  $M$  for which  $\mathrm{Ann}(x,-)\in \mathcal{F}_C$, for all
$x\in M(C)$ and $C\in \mathcal{C}$, is a hereditary pretorsion class.
\end{lemma}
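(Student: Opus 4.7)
The plan is to verify the three closure properties that define a hereditary pretorsion class: closure under subobjects, quotients, and coproducts. For each condition, the strategy is to compute how the annihilator functor $\mathrm{Ann}(x,-)$ behaves under the relevant operation, and then feed the resulting ideal into the axioms $(T_1)$, $(T_2)$, $(T_3)$ of the linear filter.

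First, for subobjects, suppose $K \subseteq M$ is a $\mathcal{C}$-submodule with $M \in \mathcal{T}_{\mathcal{F}}$. For any $C \in \mathcal{C}$ and $y \in K(C) \subseteq M(C)$, the condition $K(f)(y) = 0$ is literally the condition $M(f)(y) = 0$ since $K$ is a subfunctor, so $\mathrm{Ann}_K(y,-) = \mathrm{Ann}_M(y,-) \in \mathcal{F}_C$. For quotients, given an epimorphism $M \to M/K$ with $M \in \mathcal{T}_{\mathcal{F}}$, every element of $(M/K)(C)$ has the form $x + K(C)$ for some $x \in M(C)$. By Lemma \ref{firstlemma}(i), $\mathrm{Ann}(x+K(C),-) = (K(-):x)$, and this ideal contains $\mathrm{Ann}(x,-)$ since $M(f)(x)=0$ implies $M(f)(x) \in K(C')$. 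Because $\mathrm{Ann}(x,-) \in \mathcal{F}_C$ by hypothesis, axiom $(T_1)$ forces $(K(-):x) \in \mathcal{F}_C$, as needed.

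For coproducts, let $\{M_i\}_{i \in I} \subseteq \mathcal{T}_{\mathcal{F}}$ and put $M = \coprod_{i \in I} M_i$. An element $x \in M(C) = \bigoplus_i M_i(C)$ has finite support, so we can write $x = x_{i_1} + \cdots + x_{i_n}$ with $x_{i_k} \in M_{i_k}(C)$. Since the summands lie in distinct direct summands, $M(f)(x) = 0$ in $M(C')$ if and only if $M_{i_k}(f)(x_{i_k}) = 0$ for each $k$; hence
\[
\mathrm{Ann}(x,-) \;=\; \bigcap_{k=1}^{n} \mathrm{Ann}(x_{i_k},-).
\]
Each $\mathrm{Ann}(x_{i_k},-)$ belongs to $\mathcal{F}_C$ by hypothesis on $M_{i_k}$, and $(T_2)$ applied inductively shows the finite intersection belongs to $\mathcal{F}_C$. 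Thus $M \in \mathcal{T}_{\mathcal{F}}$.

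The argument is largely routine; the only subtlety I expect is in the coproduct step, where one must remember that elements of $\coprod M_i(C)$ have finite support in order to reduce to a finite intersection and invoke $(T_2)$. Axiom $(T_3)$ is not used here, which is consistent with the fact that only the pretorsion (not torsion) structure is being claimed; the stability under extensions and the use of $(T_3)$ and $(T_4)$ will presumably appear in the next lemma en route to the full hereditary torsion theory.
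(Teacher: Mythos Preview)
Your proof is correct and follows the same three-step verification as the paper's argument. One small improvement: in the quotient step you correctly invoke $(T_1)$ via the containment $\mathrm{Ann}(x,-)\subseteq (K(-):x)$, whereas the paper cites $(T_2)$ at that point, which appears to be a slip; your closing remark that $(T_3)$ plays no role in this direction is accurate, though $(T_3)$ is essential for the converse lemma establishing that $\mathcal{F}_{\mathcal{T}}$ is a linear filter, so it is not merely tied to the torsion (as opposed to pretorsion) structure.
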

\begin{proof}
 1)  $\mathcal{T}_{\mathcal{F}}$ is closed under subobjects. Let $N\in \mathcal{T}_\mathcal{F}$ and $K$   be a $\mathcal{C}$-submodule of  $N$. 
It follows that  for all  $C \in \mathcal{C}$ and $x\in N(C)$ the ideal  $\mathrm{Ann}(x,-)$ lies $ \mathcal{F}_C$.  Then $\mathrm{Ann}(x,-)\in \mathcal{F}_C$ 
for all  $x\in K(C)$ since $K(C)\subset N(C)$.

2) $\mathcal{T}_{\mathcal{F}}$ is closed under quotients. Let $N\in \mathcal{T}_{\mathcal{F}}$ and $K$ be $\mathcal{C}$-submodule of $N$. 
Then, for all $C\in\mathcal{C}$ and $x+K(C)\in \displaystyle\frac{N}{K}(C)=\displaystyle\frac{N(C)}{K(C)}$ we have by
Lemma \ref{firstlemma} that
$\mathrm{Ann}(x+K(C),-)=(K(-):x)\in\mathcal{F}_C$ by $T_2$. 

3) $\mathcal{T}_{\mathcal{F}}$ is closed under coproducts. Let $\{N_\lambda\}_{\lambda\in N}$ be a family of
$\mathcal{C}$-modules,
$N_\lambda\in \mathcal{T_F}$, and $\overline{X}=(x_\lambda)_{\lambda\in N}\in (\coprod_{\lambda}N_{\lambda})(C)=\coprod_{\lambda}
N_{\lambda}(C)$.  If we consider the set
$\{x_{\lambda_1},\ldots,x_{\lambda_n}\}$ consisting of all the non-zero coordinates of $\overline{X}$ then  
we have $\mathrm{Ann}(\overline{X},-)$$=\displaystyle{\cap^n_{i=1}}\mathrm{Ann}(x_{\lambda_i},-)\in \mathcal{F}_C$. This implies that $\coprod_\lambda N_\lambda\in \mathcal{T_F}$.
\end{proof}

\begin{lemma}
Let $\mathcal{T}$ be a hereditary pretorsion class. For each $C\in  \mathcal{C}$, let 
$\mathcal{F}_C=\{I\subset \mathcal{C}(-,C)\}$  be the family of right ideals for which
$\displaystyle\frac{\mathcal{C}(-,C)}{I}\in \mathcal{T}$. Then the class 
$\mathcal{F}_{\mathcal T}=\{\mathcal{F}_C\}_{C\in \mathcal{C}}$ is a linear filter on $\mathcal{C}$.

\end{lemma}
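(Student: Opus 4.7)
The plan is to verify the three conditions $(T_1)$, $(T_2)$, and $(T_3)$ in the definition of linear filter, using only the three closure properties a hereditary pretorsion class enjoys: closure under subobjects, quotients, and coproducts.

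First I would check $(T_1)$. Given $I\in\mathcal{F}_C$ and $I\subset J\subset \mathcal{C}(-,C)$, the inclusion $I\subset J$ induces an epimorphism $\mathcal{C}(-,C)/I \twoheadrightarrow \mathcal{C}(-,C)/J$. Since $\mathcal{C}(-,C)/I\in\mathcal{T}$ and $\mathcal{T}$ is closed under quotients, $\mathcal{C}(-,C)/J\in\mathcal{T}$, hence $J\in\mathcal{F}_C$.

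Next I would handle $(T_2)$. For $I,J\in\mathcal{F}_C$, consider the canonical morphism of $\mathcal{C}$-modules
\[
\mathcal{C}(-,C)/(I\cap J) \longrightarrow \mathcal{C}(-,C)/I \oplus \mathcal{C}(-,C)/J, \qquad f+(I\cap J)(C') \mapsto (f+I(C'),\, f+J(C')).
\]
Its componentwise kernel at any $C'$ consists of those $f$ lying in both $I(C')$ and $J(C')$, so the map is a monomorphism. The target is a finite coproduct of objects of $\mathcal{T}$, hence lies in $\mathcal{T}$; by closure under subobjects, so does $\mathcal{C}(-,C)/(I\cap J)$, giving $I\cap J\in\mathcal{F}_C$.

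For $(T_3)$, the main step and the only one that uses the Yoneda structure in an essential way: let $I\in\mathcal{F}_C$ and $h\in\mathcal{C}(B,C)$. Composition with $h$ yields the natural transformation $h_\ast:\mathcal{C}(-,B)\to\mathcal{C}(-,C)$, $f\mapsto hf$. Composing with the projection $\pi:\mathcal{C}(-,C)\to\mathcal{C}(-,C)/I$ gives a morphism $\varphi=\pi\circ h_\ast:\mathcal{C}(-,B)\to\mathcal{C}(-,C)/I$ whose kernel at any object $C'$ is
\[
\{f\in\mathcal{C}(C',B) : hf\in I(C')\} = (I(C'):h).
\]
Hence $\mathcal{C}(-,B)/(I(-):h)$ embeds into $\mathcal{C}(-,C)/I\in\mathcal{T}$, and by closure of $\mathcal{T}$ under subobjects we obtain $(I(-):h)\in\mathcal{F}_B$, as required.

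The only mildly delicate point is the identification of the kernel of $\varphi$ in $(T_3)$ with the transporter ideal $(I(-):h)$; once that is in place, all three axioms follow immediately from the three closure properties of a hereditary pretorsion class, and no further structure (such as the Gabriel axiom $(T_4)$) is needed here.
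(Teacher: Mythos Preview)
Your proof is correct and, for $(T_2)$ and $(T_3)$, matches the paper's argument essentially verbatim (monomorphism into a finite coproduct for $(T_2)$; kernel of $\pi\circ h_\ast$ identified with the transporter ideal for $(T_3)$). For $(T_1)$ you use the canonical epimorphism $\mathcal{C}(-,C)/I \twoheadrightarrow \mathcal{C}(-,C)/J$ together with closure under quotients, whereas the paper writes down a map $\mathcal{C}(-,C)/J \to \mathcal{C}(-,C)/I$, $f+J(C')\mapsto f+I(C')$, and appeals to closure under subobjects; that map is not well defined when $I\subsetneq J$, so your version is in fact the cleaner and correct one.
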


\begin{proof}
 ($T_1$) Let $I\in \mathcal{F}_C$ and $J\subset\mathcal{C}(-,C)$ a right ideal such that $I\subset J$. The monomorphism 
 \begin{eqnarray*}
 \varphi:\frac{\mathcal{C}(-,C)}{J} &\rightarrow& \frac{\mathcal{C}(-,C)}{I}\\
 \ \varphi_{C'}(f+J(C'))&=&f+I(C')
 \end{eqnarray*}
 for all  $ f\in\mathcal{C}(C',C)$ and $C'\in\mathcal{C}$,  implies that  
 $\displaystyle\frac{\mathcal{C}(-,C)}{J}\in \mathcal{T}$ since $\mathcal{T}$ is closed under subobjects. Hence $J\in \mathcal{F}_C$.

($T_2$) If $I,J\in \mathcal{F}_C$ then $I\cap J\in \mathcal{F}_C$ beacuse we have the monomorphism
\[
 \varphi:\displaystyle\frac{\mathcal{C}(-,C)}{I\cap J}\rightarrow \displaystyle\frac{\mathcal{C}(-,C)}{I}\coprod  \displaystyle\frac{\mathcal{C}(-,C)}{J}
\]
 given by
$\varphi_{C'}(f+(I\cap J)(C'))=(f+I(C'), f+J(C'))$, $C'\in \mathcal{C}$, $f\in \mathcal{C}(C',C)$ since $\mathcal{T}$
is a closed under coproducts. 

($T_3$) Let $I\in \mathcal{F}_C$. Then $\displaystyle\frac{\mathcal{C}(-,C)}{I}\in \mathcal{T}$. We will to prove that

\[
\displaystyle\frac{\mathcal{C}(-,C')}{(I(-):h)}\subset \displaystyle\frac{\mathcal{C}(-,C)}{I}
\]
for all $h\in \mathcal{C}(C',C)$ and  $C'\in\mathcal{C}$. 
Indeed,  let $h\in \mathcal{C}(C',C)$, then  the natural morphisms $(-,h): \mathcal{C}(-,C')\longrightarrow \mathcal{C}(-,C)$ and 
$\pi : \mathcal{C}(-,C)\longrightarrow \displaystyle\frac{\mathcal{C}(-,C)}{I}$ induce the exact sequence

\[
0\longrightarrow (I(-):h)\longrightarrow \mathcal{C}(-,C')\stackrel{\pi h}\longrightarrow \displaystyle\frac{\mathcal{C}(-,C)}{I}
\]
which  implies $\displaystyle\frac{\mathcal{C}(-,C')}{(I(-):h)}\subset \displaystyle\frac{\mathcal{C}(-,C)}{I}$.
Hence $\displaystyle\frac{\mathcal{C}(-,C')}{(I(-):h)}\in\mathcal{T}$ since it is closed under subobjects and finally $(I(-):h)\in \mathcal{F}_{C'}.$ 
\end{proof}

Now we are ready for the first part of the main theorem for this section

\begin{theorem}
The maps $\mathcal{F}\rightarrow\mathcal{T}_{\mathcal{F}}$, $\mathcal{T}\rightarrow\mathcal{F}_{\mathcal{T}}$ induce  a bijection between heredity  pretorsion classes and  linear filters on $\mathcal{C}$.
\end{theorem}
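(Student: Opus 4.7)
The plan is to show the two composite maps are the identity, using the two lemmas just proved to know the correspondence is well-defined on both sides, and using Lemma \ref{firstlemma} together with the canonical decomposition (\ref{isocop}) to identify the composites.

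First I would prove $\mathcal{F}_{\mathcal{T}_{\mathcal{F}}}=\mathcal{F}$. Fix $C\in\mathcal{C}$ and an ideal $I\subset\mathcal{C}(-,C)$. Unravelling definitions, $I\in\mathcal{F}_{\mathcal{T}_{\mathcal{F}}}$ exactly when $\mathcal{C}(-,C)/I\in\mathcal{T}_{\mathcal{F}}$, i.e.\ when $\mathrm{Ann}(f+I(C'),-)\in\mathcal{F}_{C'}$ for every $C'\in\mathcal{C}$ and every $f\in\mathcal{C}(C',C)$. By Lemma \ref{firstlemma}(i), this annihilator equals $(I(-):f)$. For the inclusion $\mathcal{F}_{\mathcal{T}_{\mathcal{F}}}\subset\mathcal{F}$, take $C'=C$ and $f=1_{C}$: Lemma \ref{firstlemma}(ii) gives $(I(-):1_{C})=I$, so $I\in\mathcal{F}_{C}$. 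For the reverse inclusion, assume $I\in\mathcal{F}_{C}$; then axiom $(T_{3})$ directly yields $(I(-):f)\in\mathcal{F}_{C'}$ for every $f\in\mathcal{C}(C',C)$, which is what we need.

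Next I would prove $\mathcal{T}_{\mathcal{F}_{\mathcal{T}}}=\mathcal{T}$. The key observation is that, for $M\in\mathrm{Mod}(\mathcal{C})$, $x\in M(C)$, and the associated Yoneda map $\eta^{x}:\mathcal{C}(-,C)\rightarrow M$, one has $\mathrm{Ker}(\eta^{x})=\mathrm{Ann}(x,-)$ (by the formula $\eta^{x}_{C'}(f)=M(f)(x)$), so $\mathcal{C}(-,C)/\mathrm{Ann}(x,-)\cong\mathrm{Im}(\eta^{x})$, a subobject of $M$. Now if $M\in\mathcal{T}$, then by heredity each $\mathrm{Im}(\eta^{x})\in\mathcal{T}$, so $\mathrm{Ann}(x,-)\in(\mathcal{F}_{\mathcal{T}})_{C}$ for every $x$, proving $M\in\mathcal{T}_{\mathcal{F}_{\mathcal{T}}}$. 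Conversely, if $M\in\mathcal{T}_{\mathcal{F}_{\mathcal{T}}}$, then every quotient $\mathcal{C}(-,C)/\mathrm{Ker}(\eta^{x})$ lies in $\mathcal{T}$, and the isomorphism (\ref{isocop})
\[
M\cong\coprod_{C\in\mathcal{C}}\coprod_{x\in M(C)}\mathcal{C}(-,C)/\mathrm{Ker}(\eta^{x})
\]
together with closure of $\mathcal{T}$ under coproducts gives $M\in\mathcal{T}$.

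The argument is essentially bookkeeping, so I do not expect a genuine obstacle, but the step that most deserves care is the second one: it requires recognizing that the cyclic submodule $\mathrm{Im}(\eta^{x})$ is simultaneously a quotient of $\mathcal{C}(-,C)$ (used in the forward direction, via heredity) and a building block of $M$ via (\ref{isocop}) (used in the reverse direction, via closure under coproducts). This is where all three closure properties in the definition of hereditary pretorsion class get used. Throughout, I would appeal only to Lemma \ref{firstlemma}, the decomposition (\ref{isocop}), and the two preceding lemmas, so no additional technical ingredients are required.
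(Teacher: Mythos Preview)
Your proof is correct and follows the same overall strategy as the paper: verify that both composites are the identity using Lemma~\ref{firstlemma} and the decomposition~(\ref{isocop}). The first half ($\mathcal{F}_{\mathcal{T}_{\mathcal{F}}}=\mathcal{F}$) is essentially identical to the paper's argument.

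The second half differs in a way worth pointing out. For the inclusion $\mathcal{T}\subset\mathcal{T}_{\mathcal{F}_{\mathcal{T}}}$, the paper first identifies $M$ with the coproduct $\coprod_{C}\coprod_{x}\mathcal{C}(-,C)/I^{x}$ via~(\ref{isocop}) and then computes the annihilator of a general element $\overline{X}$ of this coproduct as a finite intersection $\bigcap_{i}(I^{x_i}(-):x_i)$, invoking $(T_2)$ to conclude. You bypass all of this by observing directly that $\mathrm{Ker}(\eta^{x})=\mathrm{Ann}(x,-)$, so $\mathcal{C}(-,C)/\mathrm{Ann}(x,-)\cong\mathrm{Im}(\eta^{x})\subset M$ lies in $\mathcal{T}$ by heredity; this is shorter and never needs $(T_2)$ or the explicit coproduct description at this step. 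The reverse inclusion $\mathcal{T}_{\mathcal{F}_{\mathcal{T}}}\subset\mathcal{T}$ is handled the same way in both arguments, via~(\ref{isocop}) and closure under coproducts. Your version is a genuine streamlining of the paper's computation, at no cost.
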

\begin{proof}
 Starting with a linear filter $\mathcal{F}=\{\ \mathcal{F}_C\}$ we get 
 $\mathcal{T}=\{ M| \mathrm{Ann}(x,-)\in \mathcal{F}_C \text{ for all}\ x\in M(C)\  \text{ and each } C\in \mathcal{C}\}$ and we 
 obtain $\mathcal{I}=\{\mathcal{I}_C\}_{C\in\mathcal{C}}$ such that
 $\mathcal{I}_C=\{I\subset \mathcal{C}(-,C)|\displaystyle\frac{\mathcal{C}(-,C)}{I}\in \mathcal{T}\}$.
 
 We claim that
 $\mathcal{I}_C=\mathcal{F}_C$ for all $C\in \mathcal{C}$.  Indeed,
let $I\in \mathcal{I}_C$ then $\displaystyle\frac{\mathcal{C}(-,C)}{I}\in \mathcal{T}$, it follows that 
$\mathrm{Ann}(h+I(C),-)\in \mathcal{F}_C$ for all $h\in \mathcal{C}(C,C)$ 
and in particular $\mathrm{Ann}(1_C+I(C),-)\in \mathcal{F}_C$,
but $\mathrm{Ann}(1_C+I(h),-)=I$ by Lemma \ref{firstlemma}, hence $I\in \mathcal{F}_C$. 

Conversely, let  $F\in \mathcal{F}_C$. Then $\displaystyle\frac{\mathcal{C}(-,C)}{F}\in \mathcal{T}$.
Since $\mathrm{Ann}(g+F(B),-)=(F(-):g)\in \mathcal{F}_{B}$ for all  $B\in \mathcal{C}$ and each $ g\in\mathcal{C}(B,C)$ 
it follows that $F\in\mathcal{ I}_{C}$.

On the other hand if we start with the class $\mathcal{T}=\{M\}$ we first get 
$\mathcal{F}=\{\mathcal{F}_C\}_{C\in\mathcal{C}}$ such that
$\mathcal{F}_C=\{ I\subset \mathcal{C}(-,C)|\displaystyle\frac{\mathcal{C}(-,C)}{I}\in \mathcal{T}\}$ and we obtain 
$\mathcal{T'}= \{N| \mathrm{Ann}(x,-)\in \mathcal{F}_C \text{ for all}\ x\in N(C)\  \text{ and each } C\in \mathcal{C}\}$. 

We will prove $\mathcal{T}=\mathcal{T'}$.  Let $M\in \mathcal{T}$, then we put
\[
M\cong \coprod_{C\in \mathcal{C}}\coprod_{x\in M(C)}\displaystyle\frac{\mathcal{C}(-,C)}{I^x}
\]
by (\ref{isocop}),  it follows that $\displaystyle\frac{\mathcal{C}(-,C)}{I^x}\in \mathcal{T}$ for all $x\in F(C)$ since $\mathcal{T}$ is closed under subobjects, this implies that $\{I^x\}_{x\in M(C)}\subset \mathcal{F}_C$. 
Now, let $B\in \mathcal{C}$ and assume that  $\overline{X}=(\overline{X}_{\lambda})_{\lambda\in N}\in M(B)$ is non zero. Let  
$\{\overline{X}_{\lambda_{1}},\ldots,\overline{X}_{\lambda_{n}}\}$ be  the set of  all non-zero coordinates, we can put for $i=1,\ldots,n$:
\[
\overline{X}_{\lambda_{i}}=x_i+I^{x_i}(B), \text{}x_i\in \mathcal{C}(B,C_i)\ \text{ for some }C_i\in \mathcal{C}
 \]
Thus, $\mathrm{Ann}(\overline{X},-)=\cap_{i=1}^n \mathrm{Ann}(\overline{X}_{\lambda_i},-)=\cap_{i=1}^n \mathrm{Ann}(x_i+I^{x_i}(B),-)=
\cap_{i=1}^n(I^{x_i}(-):x_i)\in \mathcal{F}_{B}$ by $T_2$ and by Lemma \ref{firstlemma}. 
Hence,  $M\in \mathcal{T'}$. 

Now, asumme that   $M\in\{M|\mathrm{Ann}(x, -)\in \mathcal{F}_C \ \text{for\ all}\ x\in M(C), C\in \mathcal{C} \}$ 
and let  $\overline{X}\in M(C)$ whose  the unique non-zero coordinate is  $1_C+I^{x_C}(C)$,  then
\begin{eqnarray*}
\mathrm{Ann}(\overline{X})=\mathrm{Ann}(1_C+I^{x_C}(C),-)=I^{x_C}\in \mathcal{F}_C
\end{eqnarray*}
and this implies  $\displaystyle\frac{\mathcal{C}(-,C)}{I^{x_C}}\in \mathcal{T}$ since $\mathcal{T}$ is closed under subjects, finally 
 $M\cong \coprod_{C\in \mathcal{C}}\coprod_{x\in M(C)}\displaystyle\frac{\mathcal{C}(-,C)}{I^x}$ lies in $ \mathcal{T}$ since $\mathcal{T}$ is closed under coproducts.
\end{proof}
 Now we prove the main theorem of this section

\begin{theorem}
The maps $\mathcal{F}\rightarrow\mathcal{T}_{\mathcal{F}}$, $\mathcal{T}\rightarrow\mathcal{F}_{\mathcal{T}}$ induce  a bijection between heredity  torsion classes and  Gabriel filters on $\mathcal{C}$.
\end{theorem}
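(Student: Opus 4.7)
The plan is to reduce the statement to the bijection between hereditary pretorsion classes and linear filters established in the previous theorem. A hereditary torsion class is precisely a hereditary pretorsion class that is in addition closed under extensions, and the previous theorem already matches $\mathcal{F} \leftrightarrow \mathcal{T}_{\mathcal{F}}$ bijectively at the pretorsion level. So it suffices to prove the equivalence: $\mathcal{T}_{\mathcal{F}}$ is closed under extensions if and only if $\mathcal{F}$ satisfies axiom $(T_4)$.

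For the implication $(T_4)\Rightarrow$ closure under extensions, I would start from an exact sequence $0\to M'\to M\to M''\to 0$ with $M',M''\in\mathcal{T}_{\mathcal{F}}$, pick any $x\in M(C)$, and write $\bar{x}$ for its image in $M''(C)$. Set $J=\mathrm{Ann}(\bar{x},-)$, which belongs to $\mathcal{F}_C$ because $M''\in\mathcal{T}_{\mathcal{F}}$. For each $h\in J(B)$, $M(h)(x)$ has trivial image in $M''(B)$, hence lifts to some $y_h\in M'(B)$. A direct unwinding of the definitions yields the key identity $(\mathrm{Ann}(x,-)(-):h)=\mathrm{Ann}(y_h,-)$, whose right-hand side lies in $\mathcal{F}_B$ because $M'\in\mathcal{T}_{\mathcal{F}}$. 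Applying $(T_4)$ to the ideal $\mathrm{Ann}(x,-)$ and the filter element $J$ gives $\mathrm{Ann}(x,-)\in\mathcal{F}_C$, so $M\in\mathcal{T}_{\mathcal{F}}$.

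For the reverse implication I would assume $\mathcal{T}$ is closed under extensions and verify $(T_4)$ for $\mathcal{F}_{\mathcal{T}}$. Given $J\in\mathcal{F}_{\mathcal{T},C}$ and an ideal $I\subset\mathcal{C}(-,C)$ satisfying $(I(-):h)\in\mathcal{F}_{\mathcal{T},B}$ for every $h\in J(B)$ and every $B$, my strategy is to place $\mathcal{C}(-,C)/I$ inside the short exact sequence
\[
0\longrightarrow (J+I)/I\longrightarrow \mathcal{C}(-,C)/I\longrightarrow \mathcal{C}(-,C)/(J+I)\longrightarrow 0
\]
and argue that both outer terms lie in $\mathcal{T}$. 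The right-hand term is a quotient of $\mathcal{C}(-,C)/J\in\mathcal{T}$. For the left, I use $(J+I)/I\cong J/(J\cap I)$ together with Lemma \ref{firstlemma} to identify $\mathrm{Ann}(\bar{h},-)=(I(-):h)$ for every $\bar{h}=h+(J\cap I)(B)$ with $h\in J(B)$; each cyclic submodule $\mathcal{C}(-,B)/(I(-):h)$ therefore belongs to $\mathcal{T}$, and $J/(J\cap I)$ is an epimorphic image of their coproduct, hence itself lies in $\mathcal{T}$. Closure of $\mathcal{T}$ under extensions then yields $\mathcal{C}(-,C)/I\in\mathcal{T}$, i.e., $I\in\mathcal{F}_{\mathcal{T},C}$.

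The main obstacle is the second direction, specifically identifying $J/(J\cap I)$ as a quotient of a coproduct of cyclic modules already known to lie in $\mathcal{T}$. This identification rests on the computation $\mathrm{Ann}(\bar{h},-)=(I(-):h)$, which exploits the fact that $h\in J(B)$ forces $hf\in J(B')$ automatically for every $f\colon B'\to B$, so that vanishing modulo $J\cap I$ collapses to vanishing modulo $I$. Once this identification is in hand, the rest reduces to the previous theorem and to standard closure under subobjects, quotients, coproducts, and extensions.
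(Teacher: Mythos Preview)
Your proposal is correct and follows essentially the same route as the paper: both reduce to showing that $(T_4)$ for $\mathcal{F}$ is equivalent to closure under extensions for the associated pretorsion class, and both prove the two implications via the same exact sequence $0\to J/(J\cap I)\cong (J+I)/I\to \mathcal{C}(-,C)/I\to \mathcal{C}(-,C)/(J+I)\to 0$ together with the key computation $(\mathrm{Ann}(x,-):h)=\mathrm{Ann}(N(h)(x),-)$ in the other direction. Your framing is slightly cleaner in making the reduction to the previous theorem explicit, but the mathematical content is identical.
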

\begin{proof}
We will only prove that if $\mathcal{F}$ is a hereditary torsion class, the corresponding topology $\mathcal{F}=\{\mathcal{F}_C\}$, where
$\mathcal{F}_C=\{I\subset\mathcal{C}(-,C)| \displaystyle\frac{\mathcal{C}(-,C)}{I}\in \mathcal{T}\}$, satisfies $T_4$.  Indeed, 
let $I\subset\mathcal{C}(-,C)$ an ideal such that $(I(-):h)\in \mathcal{F}_{C'}$, for all $h\in J(C')$ and $C'\in \mathcal(C)$ 
for some $J\in \mathcal{F}_C$, and consider the exact sequence
\[
 0\longrightarrow \displaystyle\frac{J}{I\cap J}\longrightarrow  \displaystyle\frac{\mathcal{C}(-,C)}{I} 
 \longrightarrow \displaystyle\frac{\mathcal{C}(-,C)}{I+J} \longrightarrow 0,
\]
where $\displaystyle\frac{\mathcal{C}(-,C)}{I+J}\in \mathcal{T}$, since it is a quotient $\mathcal{C}$-modules 
of $\displaystyle\frac{\mathcal{C}(-,C)}{I}\in \mathcal{T}$. Now, we have $(I(-):h)=((I\cap J)(-):h)$ for all $h\in J(C')$,
 but this implies that $((I\cap J)(-): h)\in \mathcal{F}_{C'}$, and $\displaystyle\frac{\mathcal{C}(-,C)}{((I\cap J)(-): h)}\in \mathcal{F}$.

In this way we have a map of $\mathcal{C}$-modules for all $h\in J(C')$
\[
 \displaystyle\frac{\mathcal{C}(-,C)}{(I\cap J: h)}\stackrel{\varphi_h}\longrightarrow \displaystyle\frac{J}{I\cap J}
\]
given by $(\varphi_h)_{C''}(g+((I\cap J)(C''):h))=hg+(I\cap J)(C'')$,  which induces an epimorphism of $\mathcal{C}$-modules
\begin{eqnarray*}
 \varphi=(\varphi_h):\displaystyle\coprod_{h\in J(C')}\displaystyle\frac{\mathcal{C}(-,C')}{(I\cap J:h)}\longrightarrow
\displaystyle\frac{J}{I\cap J},
\end{eqnarray*}
given by $\varphi_{C''}((g_h+((I\cap J)(C''):h))_{h\in J(C')})=\sum_{h\in J(C')}(hg_h+(I\cap J)(C'))$ for all 
$g\in \mathcal{C}(C'',C')$

It follows that $\displaystyle\coprod_{h\in J(C')}\displaystyle\frac{\mathcal{C}(-,C')}{(I\cap J)(-):h}\in \mathcal{T}$. 
Since $\mathcal{T}$ is closed under coproducts and finally $\displaystyle\frac{J}{I\cap J}\in \mathcal{T}$ because 
$\mathcal{F}$ is closed under quotients. Since $\mathcal{T}$ is closed under extensions it follows that 
$\displaystyle\frac{\mathcal{C}(-,C)}{I}\in \mathcal{T}$ and therefore $I\in \mathcal{T}$.

Asumme  that $\mathcal{F}=\{\mathcal{F}_C\}_{C\in \mathcal{C}}$ is a Gabriel Filter for $\mathcal{C}$ and let 
$0\longrightarrow K \longrightarrow N \longrightarrow \displaystyle\frac{N}{K} \longrightarrow 0$ be an exact sequence 
of $\mathcal{C}$-modules for wich  $K$ and $\displaystyle\frac{N}{K}$ are in $\mathcal{T}$. 
Let be $C\in \mathcal{C}$ and $x\in N(C)$. Then $x+K(C)\in \displaystyle\frac{N}{K}(C)$ and $\mathrm{Ann}(x+K(C),-)\in \mathcal{F}_C$
since $\displaystyle\frac{N}{K}\in \mathcal{T}$. Let $C'\in \mathcal{C}$ and $h\in \mathrm{Ann}(x+K(C),C')\subset\mathcal{C}(C',C)$, 
we first will  prove the following:
\begin{itemize}
 \item[(1)]$\mathrm{Ann}(N(h)(x),-)\in \mathcal{F}_{C'}$
 \item[(2)]$(\mathrm{Ann}(x)(-):h)=\mathrm{Ann}(N(h)(x),-)$
\end{itemize}
1) Since $h\in \mathrm{Ann}(x+K(C),C')\subset \mathcal{C}(C',C)$ we have 
$0=\displaystyle(\frac{N}{K})(h)(x+K(C))=N(h)(x)+K(C')$, it follows
that $N(h)(x)\in K(C')$ and hence $\mathrm{Ann}(N(h)(x))\in \mathcal{F}_{C'}$, because $K\in\mathcal{T}$. 
2) Let  $C''\in \mathcal{C}$.  Then $f\in (\mathrm{Ann}(x,-):h)(C'')\subset \mathcal{C}(C'',C')$ if and only if 
$hf\in \mathrm{Ann}(x,C'')$ if and only if $N(hf)(x)=N(f)(N(h)(x))=0$ if and only if $f\in \mathrm{Ann}(N(h)(x),C'')$. It follows that
$(\mathrm{Ann}(x,-):h)=\mathrm{Ann}(N(h)(x),-)$ and (2) holds.

We proved that $(\mathrm{Ann}(x,-):h)=\mathrm{Ann}(N(h)(x),-)\in \mathcal{F}_{C'}$,
for all $h\in \mathrm{Ann}(x+K(C),C')$, $C'\in \mathcal{C}$. It follows that $\mathrm{Ann}(x,-)\in \mathcal{F}_C$ 
since $\mathrm{Ann}(x+K(C),-)\in \mathcal{F}_{C'}$, i.e. $N\in \mathcal{T}$.
\end{proof}

\section{Linear Topologies}

We have seen that a hereditary torsion theory $\mathcal{T}$  is characterized by the class $\mathcal{F_{T}}=\{\mathcal{F}_C\}_{C\in \mathcal{C}}$ such  that $\mathcal{F}_C=\{ I_{\lambda}\}_{\lambda\in \Lambda}$ is a family of right ideals  for which $\displaystyle\frac{\mathcal{C}(-,C)}{I_{\lambda}}\in \mathcal{T}$.
It turns out that for such a family $\mathcal{F}_C=\{ I_{\lambda}\}_{\lambda\in N}$, $C\in\mathcal{C}$ of right ideal
$I_{\lambda}\subset \mathcal{C}(-,C)$ we have that
$\mathcal{F}_C(C')=\{I_{\lambda}(C')\}$ is the family of neighbordhoods of the zero map $0:\mathcal{C'}\longrightarrow C$ for
certain topology on $\mathcal{C}(C',C)$. For this reason we start with a general review of topological groups.

Remember that an abelian group $G$ is a topological group if it is is equipped with a topology for wich
 the group operations
$(a,b) \longmapsto a+b$ and $a \longmapsto -a$ are continous functions $G\times G\longrightarrow G$ and $G\longrightarrow G.$

We have the well known 
\begin{proposition}
  Let $G$ be a topological group. For an  $a\in G$ the traslation map
\begin{eqnarray*}
 \mathcal{L}_a&:&G\rightarrow G\\
g&\mapsto&a+g
\end{eqnarray*}
is a homeomorfism.

\end{proposition}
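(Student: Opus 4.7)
The plan is to exhibit $\mathcal{L}_a$ as a composition of continuous maps and identify its inverse explicitly, so that both $\mathcal{L}_a$ and $\mathcal{L}_a^{-1}$ are continuous.

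First I would verify the algebraic fact that $\mathcal{L}_a$ is a bijection with inverse $\mathcal{L}_{-a}$: this is immediate, since for every $g \in G$,
\[
 \mathcal{L}_{-a}\bigl(\mathcal{L}_a(g)\bigr) = -a + (a+g) = g = a + (-a + g) = \mathcal{L}_a\bigl(\mathcal{L}_{-a}(g)\bigr),
\]
using only associativity and the definition of the inverse element.

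Next I would show continuity of $\mathcal{L}_a$. The natural approach is to factor $\mathcal{L}_a$ as
\[
 G \xrightarrow{\iota_a} G \times G \xrightarrow{+} G, \qquad \iota_a(g) = (a,g),
\]
where $+$ denotes the group operation. The map $+$ is continuous by hypothesis (since $G$ is a topological group), and the map $\iota_a$ is continuous because its two components are the constant map $g \mapsto a$ and the identity, both of which are continuous into the product. Thus $\mathcal{L}_a = (+) \circ \iota_a$ is continuous.

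Finally, the very same argument applied to $-a$ in place of $a$ shows that $\mathcal{L}_{-a}$ is continuous, and by the first step $\mathcal{L}_{-a} = \mathcal{L}_a^{-1}$. Hence $\mathcal{L}_a$ is a continuous bijection with continuous inverse, i.e.\ a homeomorphism. There is no real obstacle here; the only subtlety is to notice that one does \emph{not} need the continuity of inversion $a \mapsto -a$ for this proposition, only the continuity of addition together with the existence of the group inverse $-a$ as an element of $G$.
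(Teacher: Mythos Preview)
Your proof is correct and complete. The paper does not actually supply a proof of this proposition: it is introduced with the phrase ``We have the well known'' and no argument follows, so there is nothing to compare against beyond noting that your approach is the standard one.
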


So $U$ is a neighborhood of $a\in G$ if and only if $U-a$ is a neighbordhood of  $0$. Thus the topology of $G$ 
is complete by determined by a neighbordhood basis of 0.

\begin{proposition}
Let $\mathcal{C}$ a preadditive category and  $\mathcal{F}=\{\mathcal{F}_C\}_{C\in\mathcal{C}}$
 be a linear filter for $\mathcal{C}$. Then there exists  a
 topology $T_{(A,C)}$ for $\mathcal{C}(A,C)$, for each pair of objects $A,C\in \mathcal{C}$, 
 for wich the composition $\mathcal{C}(A,B)\times\mathcal{C}(A,B)\rightarrow \mathcal{C}(A,C)$ 
  and the sum $ \mathcal{C}(A,B)\times\mathcal{C}(A,B)\longmapsto \mathcal{C}(A,B)$,
$(f,g)\longmapsto (f+g)$, are continuos. Morever  $\mathcal{F}_C(A)$ is a basis of neighbordhoods for $0\in \mathcal{C}(A,C)$.
\end{proposition}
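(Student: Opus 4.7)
The plan is to declare, for each pair $A,C\in\mathcal{C}$, the collection $\mathcal{F}_C(A)=\{I(A)\mid I\in\mathcal{F}_C\}$ to be a basis of neighborhoods of $0\in\mathcal{C}(A,C)$ and to propagate it to every point by translation: a subset $U\subset\mathcal{C}(A,C)$ is declared open iff for each $f\in U$ there is some $I\in\mathcal{F}_C$ with $f+I(A)\subset U$. This prescription defines a topology because $\mathcal{F}_C(A)$ is nonempty ($\mathcal{C}(-,C)$ itself lies in $\mathcal{F}_C$ by $(T_1)$) and closed under finite intersections, since $(T_2)$ and the pointwise computation of intersections of subfunctors give $I(A)\cap J(A)=(I\cap J)(A)$.

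Each $I\subset\mathcal{C}(-,C)$ is a subfunctor of an abelian-group-valued functor, so $I(A)$ is a subgroup of $\mathcal{C}(A,C)$; in particular $I(A)+I(A)\subset I(A)$ and $-I(A)=I(A)$. The standard argument for topological groups then yields at once the continuity of addition $(f,g)\mapsto f+g$ and of negation, so each $\mathcal{C}(A,C)$ becomes a topological abelian group with $\mathcal{F}_C(A)$ as a neighborhood basis of $0$.

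The real content of the statement is the continuity of the composition map $\mathcal{C}(A,B)\times\mathcal{C}(B,C)\to\mathcal{C}(A,C)$, $(f,h)\mapsto hf$ (the display in the statement seems to have a typo in its second factor). Fix $(f,h)$ and a basic neighborhood $hf+I(A)$ of $hf$ with $I\in\mathcal{F}_C$. Axiom $(T_3)$ provides $(I(-):h)\in\mathcal{F}_B$, so $U:=f+(I(-):h)(A)$ is a neighborhood of $f$ in $\mathcal{C}(A,B)$; take $V:=h+I(B)$, a neighborhood of $h$ in $\mathcal{C}(B,C)$. For $f'=f+f_0\in U$ and $h'=h+h_0\in V$ one computes
\[
h'f'-hf \;=\; hf_0+h_0 f+h_0 f_0.
\]
The first term lies in $I(A)$ by definition of $(I(-):h)$, while the other two lie in $I(A)$ because $I$, being a subfunctor of $\mathcal{C}(-,C)$, is stable under precomposition: $I(\varphi)(h_0)=h_0\varphi\in I(A)$ for every $\varphi\in\mathcal{C}(A,B)$ and $h_0\in I(B)$.

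The only step requiring real care is this last one. Axiom $(T_3)$ supplies control of composition only in the right-hand variable (the fixed $h$), whereas functoriality of the subfunctor $I$ provides the corresponding control in the left-hand variable. The interplay of these two kinds of control is precisely what absorbs the three cross-terms in the expansion of $h'f'-hf$; without functoriality of $I$ one would have to impose an additional ``two-sided'' axiom on the filter.
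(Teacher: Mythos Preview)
Your proof is correct and follows essentially the same route as the paper: you define the same translation-invariant topology, verify the topology axioms via $(T_1)$--$(T_2)$, and for continuity of composition use exactly the paper's choice of neighborhoods $f+(I(-):h)(A)$ and $h+I(B)$ together with the same three-term expansion of $h'f'-hf$, absorbing $hf_0$ via $(T_3)$ and the remaining terms via functoriality of $I$. Your closing remark making explicit that $(T_3)$ handles perturbations of $f$ while subfunctoriality of $I$ handles perturbations of $h$ is a nice conceptual addition not spelled out in the paper.
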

\begin{proof}
 We consider  a family of subsets $T_{(A,C)}\subset 2^{\mathcal{C}(A,C)}$ defined as follows:
$U\in T_{(A,C)}$ if $U$ is the empty set or if for each $x\in U$ there exist $I\in \mathcal{F}_C$ such that 
$x+I(A)\subset U$. 

a) $T_{(A,C)}$ is a topology for $\mathcal{C}(A,C)$.

1) It is clear that $\emptyset$ and $\mathcal{C}(A,C)$ are in $T_{(A,C)}$.
2) Let $U,V$ be in $T_{(A,B)}$ and $x\in U\cap V$. Then there are $I$ and $J$ in $\mathcal{F}_C$ such that $x+I(A)\subset U$ 
and $x+J(A)\in V$.
We have that $I\cap J \in \mathcal{F}_C$ since $\mathcal{F}$ is a linear filter of $\mathcal{C}$. 
If $y\in x+(I\cap J)(A)=x+I(A)\cap J(A)$ then there exist $r\in I(A)\cap J(A)$ such that $y=x+r\subset U\cap V$.
It follows that $x+(I\cap J)(A)\subset U\cap V$ and hence $U\cap V \in T_{(A,B)}$.
3) Let $\{U_{\lambda}\}_{\lambda\in N }$ be a family of sets such that $U_{\lambda}\in T$, 
and $x\in \bigcup$ $U_{\lambda \in N}$. Then $x\in U_\lambda$ for some
$\lambda\in N$ and there exists $I\in \mathcal{F}_C$ such that $x+I(A)\subset U_\lambda\subset \bigcup U_{\lambda \in N}$.

Now, if $U$ is a neighborhood of $0\in \mathcal{C}(A,C)$ then there exists 
$U'\in T_{(A,C)}$ for which $0\in U'\subset U$ and there exist $I\in \mathcal{F}_C$ such that $I(A)=0+I(A)\subset U' \subset U$,
it follows that $\mathcal{F}_C(A)$ is a basis of neighborhoods for $0$.

b)  For each pair $A,B\in\mathcal{C}$. The map $ \mathcal{C}(A,B)\times\mathcal{C}(A,B)\longmapsto \mathcal{C}(A,B)$,
$(f,g)\longmapsto (f+g)$ is continuos. 
Inded,  let $f,g\in \mathcal{C}(A,B)$ and $U\subset \mathcal{C}(A,B)$ open such that $f+g\in U$. Then, there
exists $I\in \mathcal{F}_B$ for wich $(f+g)+ I(A)\subset U$. 
Let $(r_1,r_2)\in (f+I(A),g+I(A))$, if we put  $r_1=f+t_1$, $r_2=g+t_2$, $t_1,t_2\in f+g+I(A)$, it follows that 
$r_1+r_2=f+g+t_1+t_2\in f+g+I(A)$.

c) $\mathcal{C}(A,B)\times \mathcal{C}(B,C)\longrightarrow \mathcal{C}(A,C)$.
$(f,g)\longmapsto (gf)$ is continuos. Indeed,  let $f\in\mathcal{C}(A,B)$, $g\in\mathcal{C}(B,C)$ and
$U\subset \mathcal{C}(A,C)$ open set such that $gf\in U$. Then,  there exits $I\in \mathcal{F}_C$, such 
that $gf+I(A)\subset U$. Now, we have  $(I(-):g)\in \mathcal{F}_B$, and if we take 
$(r_1,r_2)\in (f+(I(A):g))\times(g+I(A))\subset \mathcal{C}(A,B)\times \mathcal{C}(B,C)$, then  we can put 
$r_1=f+t_1$,  $t_1\in(I(A):g)$, and $r_2=g+t_2$,  $t_2\in I(A)$. It follows that 
\begin{eqnarray*}
r_2r_1&=&(g+t_2)(f+t_1)\\
          &=&gf+gt_1+t_2f+t_2t_1\in gf+ I(A)\subset \mathcal{C}(A,C)
\end{eqnarray*}
because $gt_1+t_2f+t_2t_1\in I(A)$.

\end{proof}

\section{Examples}

In this section we show two  examples, one of hereditary pretorsion theory and the another one of
hereditary torsion theory, both in the category of $\mathcal{C}$-modules.

\begin{definition}
Let be $\mathcal{U}$  a subclass of objects of $\mathrm{Mod}(\mathcal C)$. We say that an
$\mathcal{C}$-module $N$ is $\mathcal {U}$-sub-generated if $N$ is an subobject of another $\mathcal{U}$-generated object. 
We denote by $\sigma[\mathcal{U}]$ to the full subcategory of $\mathrm{Mod}(\mathcal{C})$ consisting of
all the $\mathcal{U}$-sub-generated objects.
\end{definition}

  It is clear that $\sigma[\mathcal{U}]$ is a pretorsion class for $\mathrm{Mod}(\mathcal{C})$, morever, we have the following.

\begin{proposition}
Let $\mathcal T$ a pretorsion class of $\mathrm{Mod}(\mathcal C)$. Then $\mathcal T$ is the category $\sigma [M]$, where
\[
M=\coprod\{\frac{\mathcal{C}(-,C)}{I}:\frac{\mathcal{C}(-,C)}{I}\text{is an object of }\mathcal{T}\}
\]
\end{proposition}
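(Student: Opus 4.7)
The plan is to establish $\mathcal{T}=\sigma[M]$ by two inclusions, relying almost entirely on the canonical decomposition (\ref{isocop}) together with the closure properties of $\mathcal{T}$ under quotients and coproducts. The preliminary observation, which will be used throughout, is that $M\in\mathcal{T}$: by construction $M$ is a coproduct of objects that lie in $\mathcal{T}$, and $\mathcal{T}$ is closed under coproducts. In particular, every coproduct $\coprod_{\lambda} M$ and every quotient of such an object again belongs to $\mathcal{T}$.

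For the inclusion $\mathcal{T}\subseteq\sigma[M]$, I would take $N\in\mathcal{T}$ and invoke (\ref{isocop}) to write
\[
N\cong\coprod_{C\in\mathcal{C}}\coprod_{x\in N(C)}\mathcal{C}(-,C)/\mathrm{Ker}(\eta^{x}).
\]
In a coproduct, each summand is a quotient of the whole via the canonical projection, so each $\mathcal{C}(-,C)/\mathrm{Ker}(\eta^{x})$ is a quotient of $N$. Closure of $\mathcal{T}$ under quotients then places each summand in $\mathcal{T}$, so each $\mathcal{C}(-,C)/\mathrm{Ker}(\eta^{x})$ appears among the terms in the coproduct defining $M$ and is therefore a direct summand of $M$. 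Assembling the split epimorphisms $M\twoheadrightarrow\mathcal{C}(-,C)/\mathrm{Ker}(\eta^{x})$ across all indices $(C,x)$ yields an epimorphism $\coprod_{(C,x)}M\twoheadrightarrow N$, which exhibits $N$ as $M$-generated and hence as an element of $\sigma[M]$.

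For the reverse inclusion $\sigma[M]\subseteq\mathcal{T}$, I would start with $N\in\sigma[M]$, fix an $M$-generated module $P$ with a monomorphism $N\hookrightarrow P$, and an epimorphism $\coprod_{\lambda}M\twoheadrightarrow P$. The preliminary observation gives $\coprod_{\lambda}M\in\mathcal{T}$, and then quotient closure yields $P\in\mathcal{T}$. Passing from $P\in\mathcal{T}$ to $N\in\mathcal{T}$ requires closure of $\mathcal{T}$ under subobjects, i.e., that $\mathcal{T}$ is a \emph{hereditary} pretorsion class; under that hypothesis $N\in\mathcal{T}$ and the inclusion is established.

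The only real obstacle is this last step. Since $\sigma[M]$ is by construction closed under subobjects, the equality $\mathcal{T}=\sigma[M]$ implicitly forces $\mathcal{T}$ to be closed under subobjects as well, so the natural reading of the statement is that $\mathcal{T}$ is a hereditary pretorsion class (as in the first example of the section). Without hereditarity, the same argument still identifies $\mathcal{T}$ with the class of $M$-generated modules, which is in general a proper subclass of $\sigma[M]$.
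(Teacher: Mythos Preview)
Your argument is correct and mirrors the paper's: both inclusions hinge on the decomposition (\ref{isocop}) together with the closure properties of $\mathcal{T}$, the only cosmetic difference being that the paper places each summand $\mathcal{C}(-,C)/K^{x}$ in $\mathcal{T}$ by viewing it as a \emph{subobject} of $N$ rather than as a quotient. Your caveat about hereditarity is well placed---the paper's own proof likewise invokes closure under submodules even though the statement says only ``pretorsion class''.
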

\begin{proof}
Since $\mathcal{T}$ is closed under submodules, coproducts and quotients it follows that $\sigma[M]$ is a full subcategory of $\mathcal{T}$.

Conversely, for every $\mathcal{C}$-module $N$ we have an isomoprphism 
\[
N\cong  \coprod_{C\in\mathcal{C}}\coprod_{x\in N(C)}\mathcal(-,C)/K^x
\]
by (\ref{isocop}). If $N$ is an object of $\mathcal{T}$, then we have that each $\mathcal{C}(-,C)/K^x$ is an subobject of $N$, it follows  $\mathcal{C}(-,C)/K^x\in\mathcal{T}$ since $\mathcal{T}$ is closed under subobjects, if follows immediately that $N$ is an object in $\sigma[M]$.
\end{proof}

\begin{theorem}
Let $I(-,?)$ be an bilateral ideal of $\mathcal{C}$ and $F=\coprod_{C\in\mathcal C} \mathcal{C}(-,C)/I(-,C)$, then  $\sigma[F]$ is the full fubcategory of $\mathrm{Mod}(\mathcal{C})$ consisting of all objects $N$ such that $IN=0$.  
\end{theorem}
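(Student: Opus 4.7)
The plan is to prove a double inclusion: (a) every $N\in\sigma[F]$ satisfies $IN=0$, and (b) every $N$ with $IN=0$ lies in $\sigma[F]$. The two directions rely on different tools. For (a) the key point is to control how the operation $M\mapsto IM$ behaves under coproducts, epimorphisms, and passage to subobjects, combined with a direct verification that $IF=0$. For (b) the key point is the canonical presentation (\ref{isocop}) of any $\mathcal{C}$-module as a coproduct of quotients of representables, together with the Yoneda identification of $\mathrm{Ker}(\eta^x)$ with $\mathrm{Ann}(x,-)$.

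For (a), I would first check that for every $C\in\mathcal{C}$ the module $\mathcal{C}(-,C)/I(-,C)$ is annihilated by $I$: an element of $I\bigl(\mathcal{C}(-,C)/I(-,C)\bigr)(A)$ is a finite sum of classes $gf+I(A,C)$ with $f\in I(A,C')$ and $g\in\mathcal{C}(C',C)$, and since $I$ is a two-sided ideal the composite $gf$ lies in $I(A,C)$, so each such class is zero. Next I would observe that $I(\coprod_\lambda M_\lambda)=\coprod_\lambda IM_\lambda$ from the definition $IM(A)=\sum_{f\in I(A,C)}\mathrm{Im}\,M(f)$, so $IF=0$ and hence $IF^{(\Lambda)}=0$ for any set $\Lambda$. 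Then I would use the two facts stated in the Ideals subsection, namely that any epimorphism $M\to M'$ induces an epimorphism $IM\to IM'$, and that for a submodule $N\subseteq M$ one has $IN\subseteq IM$ (immediate from the definition, since $N(f)$ is the restriction of $M(f)$). Combining these, if $N\subseteq M$ with $M$ a quotient of some $F^{(\Lambda)}$, then $IM=0$ and therefore $IN=0$.

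For (b), suppose $IN=0$. By (\ref{isocop}) there is an isomorphism $N\cong \coprod_{C}\coprod_{x\in N(C)}\mathcal{C}(-,C)/\mathrm{Ker}(\eta^x)$, where $\eta^x_A(f)=N(f)(x)$, so $\mathrm{Ker}(\eta^x)(A)=\{f\in\mathcal{C}(A,C):N(f)(x)=0\}$. The crucial observation is that the hypothesis $IN=0$ forces $I(-,C)\subseteq \mathrm{Ker}(\eta^x)$ for every $x\in N(C)$: for $f\in I(A,C)$ one has $N(f)(x)\in IN(A)=0$. Consequently, for each $x\in N(C)$ the canonical projection $\mathcal{C}(-,C)/I(-,C)\twoheadrightarrow \mathcal{C}(-,C)/\mathrm{Ker}(\eta^x)$ factors the summand of $N$ through a direct summand of $F$, exhibiting $N$ as a quotient of $F^{(\Lambda)}$ (in particular as a subobject of an $F$-generated object), so $N\in\sigma[F]$. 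I expect the only mildly delicate point to be keeping the bookkeeping clean, specifically verifying $I(\coprod M_\lambda)=\coprod IM_\lambda$ and $IN\subseteq IM$ for subobjects; both are immediate from the definition of $IM$ but should be stated explicitly since they carry the first half of the argument.
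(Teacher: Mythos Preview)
Your proposal is correct and follows essentially the same route as the paper. For direction (a) you are simply more explicit than the paper, which asserts $I(F^\Lambda)=0$ as ``easy to see'' and passes to submodules without comment; for direction (b) your observation that $IN=0$ forces $I(-,C)\subseteq \mathrm{Ker}(\eta^x)$, hence a surjection $\mathcal{C}(-,C)/I(-,C)\twoheadrightarrow \mathcal{C}(-,C)/\mathrm{Ker}(\eta^x)$, is exactly the content of the paper's verification that the map $\hat\eta_B(f+I(B))=\eta_B(f)$ is well defined.
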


\begin{proof}
Let $M$ be a $\mathcal{C}$-module $F$-generated. Then there exists an epimorphism $F^\Lambda\rightarrow M$. Then  we have
an epimorphism $I(F^\Lambda)\rightarrow I M$,  but it is easy to see that $I(F^\Lambda)=0$, then $IM=0$, it follows that $IN=0$ for all
$\mathcal{C}$ submodule  $N\subset M$.

Conversely, let $N$ be  a $\mathcal{C}$-module for which $IN=0$. 
Let $\eta=\{\eta_A\}_{A\in\mathcal{C}}:\mathcal{C}(-,C)\rightarrow N$ a natural transformation,
then we can  define a natural  transformation $\hat{\eta}=\{\hat{\eta}_B\}_{B\in\mathcal{C}}:\mathcal{C}(-,C)/I\rightarrow N$ as
\begin{eqnarray*}
\hat{\eta}_B&:&\mathcal{C}(B,C)/I(B)\rightarrow M(B)\\
&&f+I(B)\mapsto \eta_B(f).
\end{eqnarray*}

We claim that the natural transfomation $\hat{\eta}$ is well defined. Indeed, if $f+I(B)=g+I(B)$, then we have
$f-g\in I(B)\subset \mathcal{C}(B,C)$. Therefore there is  a commutative diagram
\[
\begin{diagram}
\node{\mathcal{C}(C,C)}\arrow{e,t}{\eta_C}\arrow{s,l}{\mathcal{C}(f-g,C)}
 \node{N(C)}\arrow{s,r}{N(f-g)}\\
 \node{\mathcal{C}(B,C)}\arrow{e,t}{\eta_B}
 \node{N(B)}
\end{diagram}
\]
It follows that $\eta_B(f)-\eta_B(g)=\eta_B(f-g)=\eta_B\mathcal{C}(f-g,C)(1_C)=N(f-g)(\eta_C(1_C))=0$ 
because $IN=0$. In this way if we proced as in (\ref{isocop}) we have
a well defined epimorphism  
\[
\coprod_{C\in\mathcal C}\coprod_{x\in N(C)}\frac{\mathcal{C}(-,C)}{I}\rightarrow N
\]
therefore $N$ is $F$-generated and also lies in $\sigma[F]$.
\end{proof}

Now, we show an example of hereditary torsion theory in the category of $\mathcal{C}$-modules.

Let $\mathbf{C}=\{C_\lambda\}_{\lambda\in\Lambda}$ a family  of objects in $\mathcal{C}$. The class  of $\mathcal{C}$-modules 
$\mathcal T=\{M|M(C_\lambda)=0\}$ is a hereditary torsion class.  Indeed, It is closed under subobjects, epimorphic images, and coproducts, and if
$0\rightarrow M\rightarrow L\rightarrow N\rightarrow 0$ is a exact sequence with $M,N\in\mathcal{T}$, then 
$0\rightarrow M(C_\lambda)\rightarrow L(C_\lambda)\rightarrow N(C_\lambda)\rightarrow 0$ is a exact sequence of  abelian
groups for all $C_{\lambda }\in\mathbf{C}$, but $M(C_\lambda)=M(C_\lambda)=0$ for all $\lambda\in\Lambda$ implies $L(C_\lambda)=0$ for all
$\lambda\in\Lambda$, therefore $ \mathcal{T}$ is closed under extensions an it is a hereditary torsion class.

We observe that $M\in\mathcal{T}$ if and only if $M(C_\lambda)=0$ for all $\lambda\in\Lambda$ if and only if
$\prod_{\lambda\in\Lambda}DM(C_\lambda)=0$ but we have
 $\prod_{\lambda\in\Lambda}DM(C_\lambda)=\prod_{\lambda\in\Lambda}(\mathcal C(C_{\lambda},-),DM)
 \cong\prod_{\lambda\in\Lambda}(M,D\mathcal{C}(C,-))\cong(M,\prod_{\lambda\in\Lambda}D\mathcal{C}(C_\lambda,-)$  by Yoneda's Lemma
 and Proposition \ref{dual}, it follows that the class $\mathcal{C}$ is cogenerated by an injective $\mathcal{C}$-module by
 Proposition \ref{injective}.
 
 We will prove that hereditary torsion classes are cogenerated by injective objects.
 
 By [AQM] we have that in the category $\mathrm{Mod}(\mathcal C)$ there exists injective envelopes. Thus  following the proof given in [Bo] it is easy 
 to see that the following proposition holds in the category of $\mathcal{C}$-modules. 
 
 \begin{proposition}[Bo, Prop. 3.2]\label{envelope}
 A torsion theory $(\mathcal{T},\mathcal F)$ is hereditary if and only if $\mathcal{F}$ is closed under injective envelopes.
 \end{proposition}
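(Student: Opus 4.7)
I would follow the classical module-theoretic argument essentially verbatim, using only the facts already available in $\mathrm{Mod}(\mathcal{C})$: the existence of injective envelopes (cited from [AQM]), the existence of a maximal torsion subobject $tM \subseteq M$ for any object $M$ once we have a torsion theory, and the general fact that a torsion-free class in an abelian category is always closed under subobjects (immediate from the definition, since $\mathrm{Hom}(T,-)$ is left-exact).

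For the forward implication, assume $(\mathcal{T},\mathcal{F})$ is hereditary, let $F \in \mathcal{F}$, and let $F \hookrightarrow E(F)$ be its injective envelope. The plan is to show that the torsion subobject $tE(F)$ is zero, so that $E(F) \in \mathcal{F}$. I would consider the intersection $tE(F) \cap F$. On one hand it is a subobject of $tE(F) \in \mathcal{T}$, hence in $\mathcal{T}$ by heredity. On the other hand it is a subobject of $F \in \mathcal{F}$, hence in $\mathcal{F}$. Since $\mathrm{Hom}(X,X) = 0$ forces $X = 0$ for any $X \in \mathcal{T} \cap \mathcal{F}$, we get $tE(F) \cap F = 0$. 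Essentiality of the embedding $F \hookrightarrow E(F)$ then forces $tE(F) = 0$, giving $E(F) \in \mathcal{F}$ as desired.

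For the converse, assume $\mathcal{F}$ is closed under injective envelopes and let $S \subseteq T$ with $T \in \mathcal{T}$. To prove $S \in \mathcal{T}$, by the characterization of torsion objects, it suffices to verify $\mathrm{Hom}(S,F) = 0$ for every $F \in \mathcal{F}$. Given any $\varphi : S \to F$, compose with $F \hookrightarrow E(F)$; since $E(F)$ is injective, the inclusion $S \hookrightarrow T$ lets us extend to $\widetilde{\varphi}: T \to E(F)$. By hypothesis $E(F) \in \mathcal{F}$, so $\widetilde{\varphi} = 0$ because $T \in \mathcal{T}$, and restriction yields $\varphi = 0$.

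The only nontrivial ingredient beyond routine diagram-chasing is the existence of the maximal torsion subobject $tM$ of an arbitrary $\mathcal{C}$-module, used in the forward direction, together with the essentiality of the injective envelope; the first is standard once one knows $\mathcal{T}$ is closed under coproducts and quotients (so the sum of all torsion subobjects of $M$ is again torsion), and the second is part of the definition of injective envelope guaranteed by [AQM]. Everything else reduces to the formal properties of torsion pairs already recalled in Section~1, so I expect no genuine obstacle — this is why the author can safely cite it as ``following the proof given in [Bo].''
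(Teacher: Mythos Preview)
Your proposal is correct and reproduces exactly the classical argument from [Bo] that the paper explicitly defers to; since the paper provides no independent proof but simply states that ``following the proof given in [Bo] it is easy to see that the following proposition holds,'' your write-up is precisely what the citation points to.
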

 
 Now, we can prove the following
 
 \begin{theorem}
 A torsion theory is hereditary if and only if it can be cogenerated by an injective module.
 \end{theorem}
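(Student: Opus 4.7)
The plan is to prove the two directions separately, with the nontrivial direction relying on an explicit construction of an injective cogenerator built from quotients of representables.

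For the easy direction ($\Leftarrow$), suppose the torsion theory $(\mathcal{T}, \mathcal{F})$ is cogenerated by an injective module $E$, so $\mathcal{T} = \{T : \mathrm{Hom}(T, E) = 0\}$. Given any subobject $T' \subset T$ with $T \in \mathcal{T}$, any morphism $\eta \colon T' \to E$ extends by injectivity of $E$ to a morphism $\bar\eta \colon T \to E$, which must vanish since $T \in \mathcal{T}$. Hence $\eta = 0$, so $T' \in \mathcal{T}$, and $\mathcal{T}$ is closed under subobjects, i.e., hereditary.

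For the nontrivial direction ($\Rightarrow$), suppose $(\mathcal{T}, \mathcal{F})$ is hereditary. Since $\mathcal{C}$ is skeletally small, the collection of pairs $(C, I)$ with $C \in \mathcal{C}$ and $I \subset \mathcal{C}(-, C)$ a right ideal such that $\mathcal{C}(-, C)/I \in \mathcal{F}$ forms a set. For each such pair I would choose an injective envelope $E_{(C, I)} := E(\mathcal{C}(-, C)/I)$, which exists in $\mathrm{Mod}(\mathcal{C})$ by [AQM] and lies in $\mathcal{F}$ by Proposition \ref{envelope}. Define
\[ E = \prod_{(C, I)} E_{(C, I)}. \]
Then $E$ is injective (a product of injectives) and $E \in \mathcal{F}$, since torsion-free classes are closed under products. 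I claim $E$ cogenerates the torsion theory, i.e., $\mathcal{T} = \{T : \mathrm{Hom}(T, E) = 0\}$. The inclusion $\subseteq$ is immediate from $E \in \mathcal{F}$. For the converse, suppose $\mathrm{Hom}(T, E) = 0$ and let $t(T)$ be the maximal torsion submodule of $T$, so that $T/t(T) \in \mathcal{F}$; the goal is to show $T/t(T) = 0$. Otherwise some $(T/t(T))(C)$ contains a nonzero element $x$, whose Yoneda morphism $\mathcal{C}(-, C) \to T/t(T)$ has nonzero image $\mathcal{C}(-, C)/I \subset T/t(T)$. Since torsion-free classes are closed under subobjects, $\mathcal{C}(-, C)/I \in \mathcal{F}$, so $(C, I)$ is one of our indices. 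The inclusion $\mathcal{C}(-, C)/I \hookrightarrow E_{(C, I)}$ then extends by injectivity of $E_{(C, I)}$ to a nonzero morphism $T/t(T) \to E_{(C, I)}$, which by the universal property of the product (placing zero in all other coordinates) induces a nonzero morphism $T/t(T) \to E$; precomposing with the quotient $T \twoheadrightarrow T/t(T)$ yields a nonzero morphism $T \to E$, contradicting the hypothesis.

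The main obstacle is the indexing issue: ensuring the cogenerator is actually a $\mathcal{C}$-module rather than a proper class. Skeletal smallness of $\mathcal{C}$, together with the observation (already exploited in (\ref{isocop})) that every module is built out of quotients of representables, is what restricts the relevant ``test objects'' of $\mathcal{F}$ to a set and makes the construction go through; the rest is formal manipulation with torsion theories and Proposition \ref{envelope}.
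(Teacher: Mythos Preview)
Your proof is correct and follows the same overall strategy as the paper: both directions are argued identically in spirit, and for the hereditary $\Rightarrow$ cogenerated-by-injective direction you build the very same injective $E=\prod E(\mathcal{C}(-,C)/I)$ indexed by torsion-free cyclic quotients, invoking Proposition \ref{envelope} to see that $E\in\mathcal{F}$.

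The only difference lies in how you verify that $\mathrm{Hom}(T,E)=0$ forces $T\in\mathcal{T}$. The paper decomposes $M$ via (\ref{isocop}) as a coproduct of cyclic modules, argues that if $M\notin\mathcal{T}$ then some summand $\mathcal{C}(-,C_0)/I^{x_0}$ maps nonzero into some $F\in\mathcal{F}$, and then decomposes $F$ in turn via (\ref{isocop}) to land in $E$. You instead pass to the torsion-free quotient $T/t(T)$ and pick a single nonzero element to produce one cyclic torsion-free submodule mapping into a factor of $E$. Your route is slightly more direct (one Yoneda map rather than two applications of (\ref{isocop})) and makes the role of the torsion radical explicit; the paper's route stays closer to the running theme of expressing every module through representable quotients. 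Both are standard and equally valid.
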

 
\begin{proof}
the proof is very nearly from the given in [Bo] but we put it here for benefict to the reader.  Let $E$ be an injective modules and put
 $\mathcal{T}=\{M|\mathrm{Hom}(M,E)=0\}$.  If $M\in\mathcal{T}$ and $L$ is a $\mathcal{C}$-module of $M$ with a non-zero homomorphism
 $\alpha:L\rightarrow E$,  then extends to a homomorphism $M\rightarrow E$, which is impossible. Hence $L\in\mathcal{T}$, and the torsion theory cogenerated by $E$ is hereditary.
 
 Conversely, assume that $(\mathcal{T},\mathcal F)$ is a hereditary torsion theory. \ \ \ Put $E$$=\prod E(\mathcal{C}(-,C)/I)$
 with the product taken over the injective envelopes of all right 
 ideals $I\subset\mathcal{C}(-,C)$ such that $\mathcal{C}(-,C)/I\in \mathcal{F}$, for all $C\in\mathcal{C}$. 
 Then $E$ is a torsion-free module because is a product of  injective envelopes,  so $\mathrm{Hom}(M,E)=0$ for every 
 $M\in\mathcal{T}$. On the other hand, Let $M$ a $\mathcal{C}$-module and put it as
  $M=\coprod_{C\in\mathcal{C}}\coprod_{x\in M(C)}\mathcal{C}(-,C)/I^x$,  if $M\notin\mathcal{T}$ then there exist a direct sumand $\mathcal{C}(-,C_0)/I^{x_0}$  with a non-zero morphism $\alpha:\mathcal{C}(-,C_0)/I^{x_0}\rightarrow F$ for some $F\in\mathcal{F}$.  Otherwise,
   if $\mathrm{Hom}(\mathcal{C}(-,C)/I^x,\mathcal{F})=0$ for all pair $C,x$ 
 we should have  $\mathcal{C}(-,C)/I^x\in\mathcal{T}$ for all pair $C,x$ and $M$ would be in $\mathcal T$, since it is closed under coproducts.
 
 Now, if we put $F=\coprod_{B\in\mathcal{C}}\coprod_{y\in F(B)}\mathcal{C}(-,B)/I^y$ as in (\ref{isocop}), then  all the direct sumands $\mathcal{C}(-,B)/I^y$ lie en 
 $\mathcal{F}$ and so there is a monomorphism $F\rightarrow E$, so $ \alpha$ induces a morphism $\mathcal{C}(-,C_0)/I^{x_0}\rightarrow E$ wich can be extende to a non-zero map $M\rightarrow E$. It follows that $M\in\mathcal{T}$ if and only if $\mathrm{Hom}(M,E)=0$, i.e, $E$ conenerates the torsion theory.
  
\end{proof}
\subsection{Dense Ideals in path categories}

In this subsection we show an example of linear filters certains categories that appear  in the study of representations of dimesional
finite algebras.

\begin{definition}
An ideal $I(-,C)\subset\mathcal{C}(-,C)$ is dense  in $\mathcal{C}(-,C)$ if for all $B\in\mathcal{C}$ and $g\in\mathcal{C}(B,C)$,
 there exist $D\in\mathcal C$ 
and $h\in \mathcal{C}(D,B)$ shuch that $gh\in I(D,C)$. 

\end{definition}

So, we have   examples of linear filters for categories as the following proposition says.

\begin{proposition}
Let be $\mathcal{F}_C$ the family of all ideals in  $\mathcal{C}(-,C)$ that are dense. The collection $\mathcal{F}=\{\mathcal{F}_C\}_{C\in\mathcal{C}}$ is a linear filter for $\mathcal{C}$.
\end{proposition}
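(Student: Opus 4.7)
The plan is to verify the three axioms $T_1$, $T_2$, $T_3$ directly from the definition of denseness, using the fact that ideals are closed under right composition (so that if $f\in I(D,C)$ and $k\in\mathcal{C}(D',D)$, then $fk\in I(D',C)$). Before starting the verification, I would note that each $\mathcal{F}_C$ is nonempty, since the full ideal $\mathcal{C}(-,C)$ is trivially dense (take $h=1_B$).

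For $T_1$, if $I\in\mathcal{F}_C$ and $I\subset J$, then for every $g\in\mathcal{C}(B,C)$ the density of $I$ supplies $h\in\mathcal{C}(D,B)$ with $gh\in I(D,C)\subset J(D,C)$, whence $J$ is dense. For $T_2$, let $I,J\in\mathcal{F}_C$ and $g\in\mathcal{C}(B,C)$. First apply density of $I$: there exist $D_1$ and $h_1\in\mathcal{C}(D_1,B)$ with $gh_1\in I(D_1,C)$. Now view $gh_1\in\mathcal{C}(D_1,C)$ and apply density of $J$: there exist $D_2$ and $h_2\in\mathcal{C}(D_2,D_1)$ with $(gh_1)h_2\in J(D_2,C)$. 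Since $I$ is an ideal, $gh_1\in I(D_1,C)$ forces $gh_1h_2\in I(D_2,C)$ as well. Hence $g(h_1h_2)\in (I\cap J)(D_2,C)$, which shows $I\cap J\in\mathcal{F}_C$.

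For $T_3$, fix $I\in\mathcal{F}_C$ and $h\in\mathcal{C}(B,C)$; I must show $(I(-):h)$ is a dense ideal of $\mathcal{C}(-,B)$. Given $B'\in\mathcal{C}$ and $g\in\mathcal{C}(B',B)$, consider the composite $hg\in\mathcal{C}(B',C)$. Density of $I$ produces $D$ and $k\in\mathcal{C}(D,B')$ with $(hg)k=h(gk)\in I(D,C)$. By the definition of the ideal quotient, this says exactly that $gk\in (I(D):h)$, so $(I(-):h)$ is dense, i.e.\ it belongs to $\mathcal{F}_B$.

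None of the three steps is a genuine obstacle; the only pitfall to watch is the order of composition in $T_2$ (one must use the density of $I$ first and then that of $J$ on the resulting map, rather than intersecting the two density witnesses directly) and, in $T_3$, the recognition that $hg$ is a morphism into $C$ on which the density hypothesis for $I$ can be invoked. I would also remark that this filter is in general \emph{not} Gabriel: axiom $T_4$ would impose an additional saturation condition on dense ideals that need not hold in an arbitrary preadditive category.
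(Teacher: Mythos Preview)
Your proof is correct and follows essentially the same argument as the paper: the verifications of $T_1$, $T_2$, and $T_3$ coincide step for step with the paper's parts (a), (b), and (c), including the two-stage density argument for $T_2$ and the passage to $hg$ for $T_3$. Your explicit remark that $I$ being a subfunctor is what makes $gh_1h_2\in I(D_2,C)$ in the $T_2$ step is a clarification the paper leaves implicit, and your observations about nonemptiness and the failure of $T_4$ are additions not present in the paper but are accurate.
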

\begin{proof}
a) Assume that $I(-,C)\subset\mathcal C(-,C)$ is dense and $I\subset J$. Let $f\in\mathcal{C}(C',C)$ then  there  exists
a morphism $g:C''\rightarrow C'$ such that $fg \in I(C'',C)\subset J(C'',C)$, and $J$ is dense in $\mathcal C$.

b) Assume that $I(-,C), J(-,C)\subset\mathcal C(-,C)$ are dense. Let $f\in\mathcal{C}(C',C)$. Since 
$I$ is dense en $\mathcal C$ then  there  exists
a morphism $g:C_1''\rightarrow C'$ such that $fg \in I(C''_1,C)\subset\mathcal{C}(C_1'',C)$. Again, since $J$ is dense  there exists
$h:C_2''\rightarrow C_1''$ for wich $(fg)h\in J(C_2'',C)$. Therefore, if  $f\in\mathcal{C}(C',C)$ then
the existence of $gh:C_2''\rightarrow C'$ implies that $f(gh)\in (I\cap J)(C_2'',C)$.

c) Let $I\subset\mathcal{C}(-,A)$ a dense ideal and $h\in\mathcal{C}(C,A)$.  We will show that
$(I(-):h)\subset \mathcal{C}(-,C)$ is dense. Let $f\in\mathcal{C}(C',C)$, then $hf\in \mathcal{C}(C',A)$.
Since $I\subset \mathcal{C}(-,A)$ is dense there exist $C''\in\mathcal{C}$ and $g:C''\rightarrow C'$ for
wich $(hf)g\in I(C'')$. It is follows that $fg\in (I(C''):h)$.

\end{proof}

Now, we show two examples where dense ideals appear.

\textbf{Example 1.} If $K$ is a field and let $K(\mathbb{Z}A_{\infty},\sigma)$ be the mesh category (see [Rin] Sec. 2.1). 
 Indeed, we can think of  $K(\mathbb{Z}A_{\infty},\sigma)$ as an additive category 
 whose indescomposable objects the vertices and  the set of morphims between two indescomposable objects  is the
 $K$-vectorial space generated by the paths between them for which the squares commute. Let $f:A\rightarrow C$  be 
 a morphism. Then the ideal  $(f)$ generated by  $f$, defined as   $(f)(D)=\{fg|g\in\mathcal{C}(D,A)\}$, is dense in $\mathcal{C}(-,C)$.
 For this, let $g:B\rightarrow C$ a map
 and consider the set $\mathrm{Supp}(f)=\{X\in\mathcal{C}|\text{there exists a path }t:X\rightarrow A\}$, then clearly 
 there exists an object $D\in \mathrm{Supp}(f)$ and a path $h:D\rightarrow B$. Since
 $D\in \mathrm{Supp}(f)$ there exists a path $t':D\rightarrow A$. Then, by the mesh relations
 we have $gh=ft'\in (f)(D)$. This is, $(f)$  is dense in $\mathcal{C}(-,C)$.
\[
\begin{diagram}
\node{\cdot}\arrow{se,t,..}{}
 \node{}
  \node{\cdot}\arrow{se,t,..}{}
 \node{}
   \node{\cdot}\arrow{se,t,..}{}
 \node{}
   \node{\cdot}\arrow{se,t,..}{}
 \node{}
   \node{\cdot}\arrow{se,t,..}{}
 \node{}\\
 \node{}
  \node{\cdot}\arrow{ne,t,..}{}\arrow{se,t,..}{}
    \node{}
  \node{\cdot}\arrow{ne,t,..}{}\arrow{se,t,..}{}
     \node{}
  \node{\cdot}\arrow{ne,t,..}{}\arrow{se,t,..}{}
     \node{}
  \node{\cdot}\arrow{ne,t,..}{}\arrow{se,t,..}{}
     \node{}
  \node{\cdot}\arrow{ne,t,..}{}\arrow{se,t,..}{}\\
  \node{\cdot}\arrow{se,t,..}{}\arrow{ne,t,..}{}
 \node{}
  \node{\cdot}\arrow{se,t,..}{}\arrow{ne,t,..}{}
 \node{}
   \node{\cdot}\arrow{se,t}{}\arrow{ne,t,..}{}
 \node{\Large{f}}
   \node{\cdot}\arrow{se,t}{}\arrow{ne,t,..}{}
 \node{}
   \node{\cdot}\arrow{se,t,..}{}\arrow{ne,t,..}{}
 \node{}\\
 \node{}
  \node{\cdot}\arrow{ne,t,..}{}\arrow{se,t,..}{}
    \node{\ \ \ t'}
  \node{A}\arrow{ne,t}{}\arrow{se,t,..}{}
     \node{}
  \node{\cdot}\arrow{ne,t}{}\arrow{se,t,..}{}
     \node{}
  \node{\cdot}\arrow{ne,t,..}{}\arrow{se,t}{}
     \node{}
  \node{\cdot}\arrow{ne,t,..}{}\arrow{se,t,..}{}\\
  \node{\cdot}\arrow{se,t,..}{}\arrow{ne,t,..}{}
 \node{}
  \node{D}\arrow[2]{se,t}{}\arrow{ne,t,..}{}
 \node{}
   \node{\cdot}\arrow{se,t,..}{}\arrow{ne,t,..}{}
 \node{}
   \node{\cdot}\arrow{se,t,--}{}\arrow{ne,t,..}{}
 \node{}
   \node{C}\arrow{se,t,..}{}\arrow{ne,t,..}{}
 \node{}\\
  \node{}
  \node{\cdot}\arrow{ne,t,..}{}\arrow{se,t,..}{}
    \node{}
  \node{\cdot}\arrow{ne,t,..}{}\arrow{se,t,..}{}
     \node{h\ \ \ \ \ \ \ \ \ }
  \node{\cdot}\arrow{ne,t,--}{}\arrow{se,t,..}{}
     \node{g}
  \node{\cdot}\arrow{ne,t,--}{}\arrow{se,t,..}{}
     \node{}
  \node{\cdot}\arrow{ne,t,..}{}\arrow{se,t,..}{}\\
   \node{\cdot}\arrow{se,t,..}{}\arrow{ne,t,..}{}
 \node{}
  \node{\cdot}\arrow{se,t,..}{}\arrow{ne,t,..}{}
 \node{}
   \node{B}\arrow{se,t,..}{}\arrow{ne,t,--}{}
 \node{}
   \node{\cdot}\arrow{se,t,..}{}\arrow{ne,t,..}{}
 \node{}
   \node{\cdot}\arrow{se,t,..}{}\arrow{ne,t,..}{}
 \node{}\\
 \node{}
  \node{\cdot}\arrow{ne,t,..}{}
    \node{}
  \node{\cdot}\arrow{ne,t,..}{}
     \node{}
  \node{\cdot}\arrow{ne,t,..}{}
     \node{}
  \node{\cdot}\arrow{ne,t,..}{}
     \node{}
  \node{\cdot}\arrow{ne,t,..}{}
 \end{diagram}
\]

\textbf{Example 2.}  Let  $(\mathfrak{T},\tau)\cong\mathbb{Z}A_{\infty}/(\tau^r)$ a stable tube of rank $r$ and  $B$ a complete set 
 of representants of the space of orbits of  $(\mathbb{Z}A_{\infty},\tau)$ (see [Rin, X.1]) . If  $\mathcal{B}$
 is the full subcategory of $K((\mathfrak{T},\tau))$ which objects are finite direct sums of objects in $B$. Then the ideal 
 $I_{\mathcal{B}}(-,C)$ is dense in $\mathbb{Z}A_{\infty}/(\tau^r)$. Indeed, if $f:X\rightarrow C$ is a map in
 $\mathbb{Z}A_{\infty}/(\tau^r)$ the we can write $f=(f_i)_{i=1}^n:\oplus_i^n X_i\rightarrow C$. Then, there exists a path
 $g_i:B_i\rightarrow X_i$, for $i=1,\ldots,n$. In this way the map $f(g_i)=(f_ig_i):B=\oplus B_i\rightarrow C$ lies in
 $I_{\mathcal{B}}(B,C)$, i.e $I_{\mathcal{B}}(-,C)$ is dense in $\mathbb{Z}A_{\infty}/(\tau^r)$. 

\begin{center}
\begin{tikzpicture}

\coordinate (A) at (2,.2);
\coordinate (B) at (3.5,.7);
\coordinate (C) at (2,2);
\coordinate (D) at (.5,2.6);
\coordinate (E) at (.5,3.6);

\foreach \coor/\formula in {A/{X_1},B/{X_2},C/{X_3},D/{X_4},E/{X_5}} {
  \fill (\coor) circle (2pt);
  \node[below right, inner xsep=-1ex] at (\coor) {$\formula$};
}

\coordinate (A) at (2,.2);
\coordinate (B) at (.5,.7);
\coordinate (C) at (0,2);
\coordinate (D) at (3.5,2.6);
\coordinate (E) at (3.5,3.6);

\foreach \coor/\formula in {A/{X_1},B/{B_2},C/{B_3},D/{B_4},E/{B_5}} {
  \fill (\coor) circle (2pt);
  \node[below right, inner xsep=-1ex] at (\coor) {$\formula$};
}

\draw[dashed,thin] (0,.5) to [out=315,in=225] (4,.5);
\draw[dashed,thin] (0,.5) to [out=45,in=135] (4,.5);

\draw[dashed,thin] (0,4) to [out=45,in=135] (4,4);
\draw[dashed,thin] (0,.5) to [out=90,in=270] (0,4);

\draw[dashed,thin] (4,.5) to [out=90,in=270] (4,4);

\draw[->] (0,3.5) to  [out=20,in=210] (.51,3.71);
\draw[->]  (3.51,3.71) to  [out=330,in=160](4,3.5);

\draw[dashed,thin] (0,4) to [out=340,in=200] (4,4);


\draw[ ->] (0,4) to  [out=320,in=160] (4,2.5);

\draw[->] (0,3) to  [out=320,in=160] (4,1.5);

\draw[->] (0,2) to  [out=320,in=160] (4,0.5);

\draw[->] (0,1) to  [out=320,in=160] (3,-.1);

\draw[->] (1,-.1) to  [out=20,in=205]  (2,.23);

\draw[->] (0,.5) to  [out=20,in=200] (2,1.1);
\draw[->] (2,1.1) to  [out=20,in=210] (3.5,1.7);
\draw[->] (0,.5) to  [out=20,in=210] (.51,.71);

\draw[->] (2,2.1) to  [out=20,in=210] (3.5,2.7);
\draw[->] (0,1.5) to  [out=20,in=210] (.51,1.71);

\draw[->] (0,2.5) to  [out=20,in=200] (2,3.1);
\draw[->] (2,2.1) to  [out=20,in=210] (3.5,2.7);
\draw[->] (0,2.5) to  [out=20,in=210] (.51,2.71);

\draw[->] (2,3.1) to  [out=20,in=210] (3.5,3.7);

\draw[->] (0.5,1.71) to  [out=18,in=200] (2,2.1);

\draw[->] (2,.23) to  [out=20,in=200] (3.5,.7);

\draw[->](3.5,.7) to  [out=20,in=210] (4,1);
\draw[->](3.5,1.7) to  [out=20,in=210] (4,2);
\draw[->](3.5,2.7) to  [out=20,in=210] (4,3);

\draw[dotted,->](4,1.5) to  [out=160,in=30] (0,1);
\draw[dotted,->](4,2.5) to  [out=160,in=30] (0,2);
\draw[dotted,->](4,3.5) to  [out=160,in=30] (0,3);

\draw[dotted,->](4,1) to  [out=160,in=350] (0,1.5);
\draw[dotted,->](4,2) to  [out=160,in=350] (0,2.5);
\draw[dotted,->](4,3) to  [out=160,in=350] (0,3.5);

\draw[dotted,->](4,1.5) to  [out=180,in=30] (0,.5);
\draw[dotted,->](4,2.5) to  [out=180,in=30] (0,1.5);
\draw[dotted,->](4,3.5) to  [out=180,in=30] (0,2.5);

\end{tikzpicture}
\end{center}

\end{document}